\tikzset{
curvarr/.style={
  to path={ -- ([xshift=2ex]\tikztostart.east)
    |- (#1) [near end]\tikztonodes
    -| ([xshift=-2ex]\tikztotarget.west)
    -- (\tikztotarget)}
  }
}
\DeclareSymbolFontAlphabet{\mathbb}{AMSb}
\DeclareSymbolFontAlphabet{\mathbbl}{bbold}
\theoremstyle{plain}
\newtheorem{thm}{Theorem}[subsection]
\newtheorem*{thm*}{Theorem}
\newtheorem{goal*}{Goal}
\newtheorem{dream*}{Dream}
\newtheorem{lem}[thm]{Lemma}
\newtheorem{prop}[thm]{Proposition}
\newtheorem{cor}[thm]{Corollary}
\theoremstyle{definition}
\newtheorem{dfn}[thm]{Definition}
\newtheorem{exm}[thm]{Example}
\newtheorem{rem}[thm]{Remark}
\theoremstyle{remark}
\newtheorem*{rem*}{Remark}
\newtheorem*{prop*}{Proposition}
\newcommand{\R}{\mathbf{R}}
\newcommand{\Hrm}{\mathrm{H}}
\newcommand{\GL}{\mathrm{GL}}
\newcommand{\Tw}{\mathrm{Tw}}
\newcommand{\Spec}{\mathrm{Spec}}
\newcommand{\fib}{\mathrm{fib}}
\newcommand{\perf}{\mathrm{perf}}
\newcommand{\Kcal}{\mathcal{K}}
\newcommand{\tr}{\mathrm{tr}}
\newcommand{\Ocal}{\mathcal{O}}
\newcommand{\Lbf}{\mathbf{L}}
\newcommand{\E}{\mathbb{E}}
\newcommand{\Q}{\mathbf{Q}}
\newcommand{\Sfrak}{\mathfrak{S}}
\newcommand{\TC}{\mathrm{TC}}
\newcommand{\Lcal}{\mathcal{L}}
\newcommand{\Scal}{\mathcal{S}}
\newcommand{\C}{\mathbf{C}}
\newcommand{\Sp}{\mathrm{Sp}}
\newcommand{\Sbb}{\mathbb{S}}
\newcommand{\Z}{\mathbf{Z}}
\newcommand{\Art}{\mathrm{Art}}
\newcommand{\ch}{\mathrm{ch}}
\newcommand{\Gal}{\mathrm{Gal}}
\newcommand{\Setcal}{{\mathcal{S}et}}
\newcommand{\Ccal}{\mathcal{C}}
\newcommand{\Mod}{\mathcal{M}\mathrm{od}}
\newcommand{\Hom}{\mathrm{Hom}}
\newcommand{\Tor}{\mathrm{Tor}}
\newcommand{\cyc}{\mathrm{cyc}}
\newcommand{\KU}{\mathrm{KU}}
\newcommand{\et}{\mathrm{\acute{e}t}}
\newcommand{\Ab}{\mathcal{A}b}
\newcommand{\Mcal}{\mathcal{M}}
\newcommand{\cfrak}{\mathfrak{c}}
\newcommand{\acyc}{\mathrm{acyc}}
\newcommand{\Ch}{\mathrm{Ch}}
\newcommand{\idbf}{\mathbf{1}}
\title{Homotopy groups of spectra and  $p$-adic $L$-functions over totally real number fields}
\author{Guillem Sala Fernandez}
\begin{document}
\maketitle
\begin{abstract}
The goal of this paper is to illustrate different approaches to understand Euler characteristics in the setting of totally real commutative and non-commutative Iwasawa theory. In addition to this, and in the spirit of \cite{Hess18} and \cite{Mitchell2005K1LocalHT}, we show how these objects interact with homotopy theoretic invariants such as ($K(1)$-local) $K$-theory and Topological Cyclic homology.
\end{abstract}

\section*{Introduction}

\subsection*{Summary of the article and structure} The main motivation for this paper was Hesselholt's expository note \cite{Hess18}. In his article, he argues that if $p$ is an odd number and $F_p$ denotes the $p$-completion of the homotopy fiber of the cyclotomic trace map $\tr:K(\Z)\to\TC(\Z;p)$, then, among others, but more interestingly to us: 
\begin{enumerate}
	\item The value of the Kubota-Leopoldt $p$-adic $L$-function $L_p(\Q,\omega^{-2k}, 1+2k)$ is non-zero if and only if $\pi_{4k+1}(F_p)$ is zero, and
	\item If $L_p(\Q,\omega^{-2k}, 1+2k)\neq 0$, then $\pi_{4k}(F_p)$ and $\pi_{4k-1}(F_p)$ are finite and
	$$\left|L_p(\Q,\omega^{-2k}, 1+2k)\right|_p=\frac{\pi_{4k-1}(F_p)}{\pi_{4k}(F_p)},$$
	where $|-|_p$ denotes the $p$-adic absolute value, 
\end{enumerate}
These results follow from combining the Iwasawa main conjecture for $F=\Q$, together with some well-known results by, for example, Bayer-Neukirch in \cite{Bayer1978} and Schneider in  \cite{Schneider1983}, and more (homotopy) $K$-theoretic results. 

Our first goal -- and the goal of this paper -- was to find an alternative proof of the result, as well as an extension of the result to the case of the Deligne-Ribet $p$-adic $L$-function, that is, to the case where $F$ is a totally real field. As it will be seen in \S\ref{CTRsection}, this was performed successfully, thanks to results by Blumberg and Mandell in \cite{blumberg2015fiber}. 

The second goal was to see if similar results could be reached in the totally real and {\it non-commutative} setting. Unfortunately, it was not possible to find a formula that related the (conjectural) Artin $p$-adic $L$-function with homotopy-theoretic invariants like in the commutative case. It was possible, however, to complete the results involving Euler characteristics in \cite{burns2005leading}, that is, a cohomological description of the $p$-adic norm for certain values of the Artin $p$-adic $L$-function was found. 

The third goal was to see if similar results held in other relevant number theoretic settings, such as the case of imaginary quadratic number fields, or even algebraic function fields, where a $K$-theoretic description of the special values of Katz's $p$-adic $L$-function has already been found. These will be addressed in the second part of this series of two articles. 

Finally, one of the tangential motivations of this paper has been to introduce the number theorist to the field of higher algebra, and the higher algebraist to some of the main topics in Iwasawa theory. For this same reason, a brief but concise introduction to both fields is provided in sections \S\ref{NT background} and \S\ref{htpy theory bg section}. Afterwards, \S\ref{CTRsection} addresses the commutative and totally real case, and \S\ref{NCTR case} addresses the non-commutative counterpart. 
\subsection*{Acknowledgments} The author would like to thank Lars Hesselholt, Jonas McCandless, Andrew Blumberg, Dustin Clausen, Francesc Bars, Francesca Gatti, Armando Gutierrez, Oscar Rivero, Jeanine Van Order, and specially his thesis supervisor Victor Rotger for their help and comments on the article. In addition to this, the project was funded by the European Research Council (ERC) under the European Union's Horizon 2020 research and innovation programme (grant agreement No. 682152).

\section{Number theoretic background}\label{NT background}

\subsection{$\Z_p^r$-extensions and the Iwasawa algebra} Let $F$ be a number field, and let $p$ be an odd prime. Recall that, given $r\geq 1$, a {\it $\Z_p^r$-extension} of $F$ is a pro-finite abelian field extension $F_\infty$ of $F$ so that $\Gamma=\Gal(F_\infty|F)\simeq\Z_p^r$. 

\begin{exm}\label{totallyrealzpextension}
Let $F=\Q$, and note that $\Gal(\Q(\mu_{p^{n+1}})|\Q)\simeq\Delta\times\Gamma_n$, where $\Delta$ is a cyclic group of order $p-1$, and $\Gamma_n$ is cyclic of order $p^n$. We can then let $\Q_n$ denote the fixed field of $\Q(\mu_{p^{n+1}})$ by $\Delta$ so that the direct limit $\Q^\cyc=\Q_\infty$ is a $\Z_p$-extension of $\Q$. By the Kronecker-Weber theorem, this is the unique $\Z_p$-extension of $\Q$. Furthermore, given any number field $F$, we can construct the {\it cyclotomic $\Z_p$-extension} of $F$ as $F^\cyc=F\cdot\Q^\cyc$.
\end{exm}

\begin{exm}
Let $F=\Kcal$ be an imaginary quadratic number field. By class field theory, there exists a $\Z_p^2$-extension $\Kcal_\infty$ of $\Kcal$ sitting inside the maximal extension of $\Kcal$ that is unramified outside $p$, which is maximal among the $\Z_p^d$-extensions of $\Kcal$. The Galois group $\Gal(\Kcal|\Q)$ acts on $\Gamma=\Gal(\Kcal_\infty|\Kcal)$ via conjugation, and if we denote the only non-trivial automorphism of $\Gal(\Kcal|\Q)$ by $\sigma$, its action gives us a decomposition
$$\Gamma=\Gamma_+\oplus\Gamma_-$$
where $\sigma\cdot g=\sigma g\sigma^{-1}=g^{\pm 1}$, for each $g\in\Gamma_{\pm}$. We then recover the cyclotomic $\Z_p$-extension taking the fixed field $\Kcal^\cyc$ of $\Kcal_\infty$ by $\Gamma_-$ and obtain the {\it anticyclotomic} $\Z_p$-extension by taking the fixed field of $\Kcal_\infty$ by $\Gamma_+$, which we denote by $\Kcal^\acyc$. Alternatively, let us assume that $p$ splits in $\Ocal_\Kcal$ as $p=\wp_+\wp_-$, then we can construct the $\Z_p$-extensions $\Kcal_\pm\subset\Kcal_\infty$ of $\Kcal$ which are unramified outside $\wp_\pm$.
\end{exm}

Now, let $\Ocal$ be the valuation ring of some finite algebraic extension of $\Q_p$. The main goal of Iwasawa theory is to study the structure of modules over the {\it Iwasawa $\Ocal$-algebra} $\Lambda_\Ocal(\Gamma)=\Ocal\llbracket \Gamma\rrbracket$, which is defined as
$$\Ocal\llbracket \Gamma\rrbracket = \lim_{U\unlhd\Gamma}\Ocal[G/U],$$
where the limit runs through all the open normal subgroups of $G$ with respect to the restriction maps $\Ocal[G/V]\to\Ocal[G/U]$, for $V\leq U$. Note that whenever the base ring $\Ocal$ is understood, we will simply denote $\Lambda(\Gamma)$, or even $\Lambda$. 

Modules over the Iwasawa algebra are also known as {\it Iwasawa modules}, and the next fundamental theorem aims at simplifying the description of such modules. 

\begin{thm}
	{\rm (Iwasawa-Serre, \autocite[Thm 2.3.9]{sharifi})} Let $G$ be a profinite group with $G\simeq\Z_p^r$, and fix a generating set $\{\gamma_i\}_{1\leq i\leq r}$ of $G$. Then, there exists a unique topological isomorphism $\Ocal\llbracket G\rrbracket\to\Ocal\llbracket T_1,\dots, T_r\rrbracket$ sending $\gamma_i-1$ to $T_i$. 
\end{thm}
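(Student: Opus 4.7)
Uniqueness of the isomorphism is immediate: the elements $\gamma_i - 1$ topologically generate the augmentation ideal of $\Ocal\llbracket G\rrbracket$, so any continuous $\Ocal$-algebra map out of $\Ocal\llbracket T_1,\dots,T_r\rrbracket$ is pinned down by the images of the $T_i$. The content is therefore to construct the map and verify it is a topological isomorphism.

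\textbf{Reduction to $r=1$.} Write $G = H \times \langle \gamma_r \rangle$ with $H \cong \Z_p^{r-1}$ topologically generated by $\gamma_1, \dots, \gamma_{r-1}$. Since a finite quotient of a product splits as a tensor product of the group algebras of the factors, passing to the inverse limit over cofinal open subgroups of the form $U_1 \times U_2$ gives a topological isomorphism
\[
\Ocal\llbracket G\rrbracket \cong \Ocal\llbracket H\rrbracket\,\widehat{\otimes}_\Ocal\,\Ocal\llbracket\langle \gamma_r\rangle\rrbracket,
\]
and analogously $\Ocal\llbracket T_1,\dots,T_r\rrbracket \cong \Ocal\llbracket T_1,\dots,T_{r-1}\rrbracket\,\widehat{\otimes}_\Ocal\,\Ocal\llbracket T_r\rrbracket$. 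By induction on $r$ it suffices to prove the case $r=1$.

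\textbf{The rank-one case.} Let $\gamma$ be a topological generator of $G \cong \Z_p$. The open subgroups $p^n G$ are cofinal in $G$, so $\Ocal\llbracket G\rrbracket = \varprojlim_n \Ocal[G/p^n G]$ with each $\Ocal[G/p^n G] \cong \Ocal[\gamma]/(\gamma^{p^n}-1)$. Setting $T = \gamma - 1$ converts $\gamma^{p^n} - 1$ into the polynomial $\omega_n(T) := (1+T)^{p^n} - 1$. Since $\gamma - 1$ lies in the Jacobson radical of $\Ocal\llbracket G\rrbracket$, the universal property of the power series ring supplies a continuous $\Ocal$-algebra map
\[
\phi\colon\Ocal\llbracket T\rrbracket \longrightarrow \Ocal\llbracket G\rrbracket, \qquad T \longmapsto \gamma - 1,
\]
fitting into a compatible system of surjections $\Ocal\llbracket T\rrbracket / (\omega_n(T)) \twoheadrightarrow \Ocal[G/p^nG]$ which are in fact isomorphisms (division by the monic polynomial $\omega_n$). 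It remains to identify $\Ocal\llbracket T\rrbracket$ with $\varprojlim_n \Ocal\llbracket T\rrbracket / (\omega_n(T))$.

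\textbf{The main technical step.} This identification amounts to showing that the filtrations $\{(\mfrak,T)^N\}_N$ and $\{(\omega_n(T))\}_n$ induce the same topology on $\Ocal\llbracket T\rrbracket$. The key input is that $\omega_n(T)$ is a distinguished polynomial of degree $p^n$: combining the binomial expansion with $p \mid \binom{p^n}{k}$ for $0 < k < p^n$ gives $\omega_n(T) \equiv T^{p^n} \pmod{\mfrak\,\Ocal\llbracket T\rrbracket}$. A short calculation --- in effect the content of the Weierstrass preparation theorem --- then exhibits the two filtrations as mutually cofinal and forces $\bigcap_n(\omega_n(T)) = 0$. This filtration comparison is the only non-formal ingredient and is where I expect the technical effort to lie; everything else is bookkeeping through universal properties and inverse limits.
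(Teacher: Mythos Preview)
The paper does not prove this theorem; it is stated with a citation to \autocite[Thm 2.3.9]{sharifi} and used as background. There is therefore no in-paper argument to compare against.

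Your proposal is correct and follows the standard route found in the cited reference: reduce to the rank-one case by a completed tensor product / induction argument, then in rank one identify $\Ocal\llbracket G\rrbracket$ with $\varprojlim_n \Ocal[T]/(\omega_n(T))$ and show this inverse limit is $\Ocal\llbracket T\rrbracket$ by comparing the $(\mfrak,T)$-adic and $(\omega_n)$-filtrations. The only comment is that your uniqueness sentence has the direction of the map reversed relative to the statement (you speak of maps \emph{out of} $\Ocal\llbracket T_1,\dots,T_r\rrbracket$, whereas the theorem asserts a map $\Ocal\llbracket G\rrbracket\to\Ocal\llbracket T_1,\dots,T_r\rrbracket$), but this is cosmetic since you are constructing an isomorphism.
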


\begin{rem}
The previous theorem allows us to think of modules over the Iwasawa $\Ocal$-algebra in terms of modules over a formal power series ring. For that reason, we will refer indistinctly to the Iwasawa algebra and its associated formal power series ring, taking into account that this identification carries the extra data of an isomorphism between both algebras. \end{rem}

\subsection{The characteristic polynomial of an Iwasawa module} Now that we know how to think of modules over the Iwasawa algebra, we would like to find invariants that simplify their study. To do so, let $\varpi$ be a uniformizer for the valuation ring $\Ocal$, and let $M$ be a finitely-generated, torsion $\Lambda$-module. It is a classical result that $\Lambda$ is a noetherian Krull domain whose only height 1 primes are either those generated by powers of $\varpi$ or by polynomials $f\in\Ocal[T_1,\dots,T_d]$ that are coprime with $\varpi$, so \autocite[Theorems 4 \& 5,\S VII.4.4]{n1998commutative} can be applied to obtain the existence of a map
\begin{align}\label{structuretheorem}M\to\bigoplus_{n\geq 0}\Lambda/\varpi^{\mu_n}\oplus\bigoplus_{m\geq 0} \Lambda/(f_m^{\lambda_m}),\end{align}
which is a {\it quasi-isomorphism}, meaning that it has finite kernel and cokernel. 

\begin{rem}
Classically, the sum of the powers $\mu_n$ of $\varpi$ give an invariant of $M$, the so-called {\it $\mu$-invariant}, which we denote by $\mu_{\Ocal,\Gamma}(M)$, or simply by $\mu(M)$, whenever the base Iwasawa algebra is understood. This invariant will vanish in the cases that will appear throughout this paper, although this might not always be the case. \end{rem}

\begin{dfn} The {\it characteristic ideal} of $M$ is the product of ideals 
\begin{align}\Ch_{\Lambda}(M)=\varpi^{\mu(M)}\prod_{m=1}^N (f_m^{\lambda_m}).\end{align}
\end{dfn}

\begin{rem} The choice of a generator for $\Ch_\Lambda(M)$ provides a {\it characteristic element} for $M$, and we denote any such choice by $\ch_\Lambda(M)$. 
\end{rem}

\subsection{Twists of Galois modules} From now on, we denote the {\it $p$-adic cyclotomic character} of a number field $F$ by $\chi_{\cyc,F}$, or simply by $\chi_\cyc$ whenever the base field $F$ is understood.  Recall this is the map $\chi_\cyc:G_F\to\Z_p^\times$ defined by $\sigma(\zeta)=\zeta^{\chi_\cyc(\sigma)}$ for all $\zeta\in\mu_{p^\infty}$. One can then define the {\it Tate module} $\Z_p(1)$ to be the $\Z_p[G_F]$-module whose action is given by
$$\sigma\cdot a = \chi_\cyc(\sigma)\cdot a,\qquad \forall\sigma\in G_F, \forall a\in\Z_p.$$
More generally, note that the {\it $n$-th twist} $\Z_p(n)$ of the Tate module is obtained by replacing $\chi_\cyc(\sigma)$ in the above definition by $\chi_\cyc(\sigma)^n$, and with this, given any $\Z_p[G_F]$-module $M$, one may define 
$$M(n)=M\otimes_{\Z_p}\Z_p(n),\qquad \sigma\cdot m =\chi_\cyc(\sigma)^n(\sigma\cdot m),\qquad\forall\sigma\in G_F, \forall m\in M.$$
\begin{exm}
Given a number field $F$ and a choice of a compatible system $\zeta_{p^n}$ of primitive $p^n$-th roots of unity, one obtains isomorphisms of $\Z_p[G_F]$-modules $\Z_p(1)\simeq \varprojlim \mu_{p^n}$ and $ \Q_p/\Z_p(1)\simeq \mu_{p^\infty}$.	
\end{exm}

\begin{rem}\label{cyclotomic char remark} It is a well-known result that the cyclotomic character factors through $\Gal(F(\mu_{p^\infty})|F)$. Thus, given that there is an isomorphism $\Gal(F(\mu_{p^\infty})|F)\simeq\Delta\times\Gamma$, where $\Gamma$ is the Galois group of the cyclotomic $\Z_p$-extension from Example \ref{totallyrealzpextension}, whenever we are given a module $M$ over the Iwasawa algebra $\Lambda$, we can form $M(n)$ by twisting the action of $\Gamma$ by the $\Gamma$-part of $\chi_\cyc$.	
\end{rem}

The ``twist'' provided by the cyclotomic character is the most commonly known, although from the definition one can tell that this can be extended to more general characters. For that reason, we provide the following definition.

\begin{dfn}
Let $\chi:G_F\to\Ocal^\times$ a continuous character, possibly of infinite order, and let $\Ocal(\chi)$ be the free $\Ocal$-module of rank 1 whose action of $G_F$ is given by $\chi$. Then, for any $\Ocal[G_F]$-module $M$, we define $M(\chi)=M\otimes_{\Ocal}\Ocal(\chi)$. 
\end{dfn}

\begin{lem} \label{TorsionPreservation} {\rm (\autocite[\S IV, Lemma 1.2]{rubin2014euler}}) Let $M$ be a finitely generated torsion $\Lambda$-module, and let $\chi:\Gamma\to\Ocal^\times$ be a character. Then, $M(\chi)$ is also a finitely generated torsion $\Lambda$-module. 
\end{lem}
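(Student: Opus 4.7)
The plan is to exhibit $M(\chi)$ as the pullback of $M$ along a continuous $\Ocal$-algebra automorphism of $\Lambda$, and then to invoke the fact that pullback along an automorphism preserves both finite generation and the torsion property.

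First I would fix a generator $e_\chi$ of the rank-one free $\Ocal$-module $\Ocal(\chi)$, which identifies $M(\chi)=M\otimes_\Ocal\Ocal(\chi)$ with $M$ as an $\Ocal$-module via $m\otimes e_\chi\leftrightarrow m$. Under this identification the twisted $\Gamma$-action becomes $\gamma\cdot_\chi m=\chi(\gamma)\,\gamma m$, where $\gamma m$ on the right denotes the original action on $M$. Extending $\Ocal$-linearly, the new $\Lambda$-module structure on $M$ is pulled back from the original one along the map $\sigma_\chi:\Lambda\to\Lambda$ determined by $\gamma\mapsto\chi(\gamma)\gamma$ for $\gamma\in\Gamma$.

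The crux is to verify that $\sigma_\chi$ is a well-defined continuous $\Ocal$-algebra automorphism of $\Lambda$. Using the Iwasawa--Serre isomorphism $\Lambda\simeq\Ocal\llbracket T_1,\ldots,T_r\rrbracket$ with $T_i=\gamma_i-1$, the assignment reads $T_i\mapsto\chi(\gamma_i)(T_i+1)-1$, and this extends to a power-series substitution precisely when each image lies in the maximal ideal $\mfrak=(\varpi,T_1,\ldots,T_r)$, i.e. when $\chi(\gamma_i)\equiv 1\pmod{\varpi}$. This congruence is the one real technical point: it follows because $\Gamma\simeq\Z_p^r$ is pro-$p$, $\chi$ is continuous, and the quotient $\Ocal^\times/(1+\varpi\Ocal)$ is a finite group of order prime to $p$, so necessarily $\chi(\Gamma)\subseteq 1+\varpi\Ocal$. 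The same construction applied to $\chi^{-1}$ produces a two-sided inverse, so $\sigma_\chi$ is an automorphism.

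With $\sigma_\chi$ in hand, the identification $M(\chi)=\sigma_\chi^*M$ yields the lemma in two short steps. If $m_1,\ldots,m_k$ generate $M$ over $\Lambda$, then $m_1\otimes e_\chi,\ldots,m_k\otimes e_\chi$ generate $M(\chi)$, because for $\mu\in\Lambda$ one has $(\mu m_i)\otimes e_\chi=\sigma_\chi^{-1}(\mu)\cdot_\chi(m_i\otimes e_\chi)$. Likewise, any nonzero $\mu\in\Lambda$ annihilating $M$ yields the nonzero element $\sigma_\chi^{-1}(\mu)$ annihilating $M(\chi)$, so $M(\chi)$ is also torsion. The only genuine obstacle is the construction of $\sigma_\chi$; everything else is bookkeeping.
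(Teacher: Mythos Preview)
Your argument is correct and is precisely the standard one: the paper does not supply its own proof of this lemma but simply cites Rubin, and the twisting automorphism $\sigma_\chi$ you construct is exactly the map $\Tw_\chi$ that the paper introduces two subsections later (\S1.6) when describing the effect of twisting on characteristic ideals. So your approach is fully aligned with the paper's framework; there is nothing to add.
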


\subsection{Pontryagin Duality} Recall that one can define an endofunctor on the category of locally compact abelian groups, which is usually denoted by $\mathrm{LCA}$, via the rule $A\mapsto A^\vee:=\Hom_{\mathrm{cts}}(A,\Q/\Z)$, where the image is endowed with the compact-open toplogy. We refer to $A^\vee$ as the {\it Pontryagin dual} of $A$. It is then a well-known result that the contra-variant functor $(-)^\vee:\mathrm{LCA}\to\mathrm{LCA}$ produces an equivalence of categories (cf. \autocite[2.5.2]{sharifi}). 
\begin{rem}
	Note that if $A$ is pro-$p$ or discrete $p$-torsion, then 
	$$A^\vee=\Hom_\mathrm{cts}(A,\Q_p/\Z_p),$$
	and if $A$ is $\Z_p$-finitely generated, then since every $\Z_p$-linear homomorphism is continuous, we have that
	$$A^\vee=\Hom_{\Z_p}(A,\Q_p/\Z_p).$$
\end{rem}

\begin{rem}
If $A$ has the additional structure of a topological $G$-module, then it can be endowed with the continuous action given by
$$(g\cdot f)(a)=f(g^{-1}\cdot a),\qquad\forall g\in G,\forall a\in A,\forall f\in A^\vee.$$	
\end{rem}

\begin{exm} Clearly, one has that $\Z_p^\vee=\Q_p/\Z_p$. Furthermore, note that if we endow $\Z_p(1)^\vee$ with the action from the previous remark, then we obtain $\Z_p(1)^\vee=\Q_p/\Z_p(-1)$. 	
\end{exm}

\begin{lem} {\rm (\autocite[(5)]{howson2002euler})} Let $M$ be a finitely generated $\Lambda(G)$-module. Then, the Pontryagin dual induces a canonical isomorphism between group homology and group-cohomology, namely,
$$\Hrm^n(G,M^\vee)\cong\Hrm_n(G,M)^\vee.   $$
	
\end{lem}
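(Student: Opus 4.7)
My plan is to compute both sides of the isomorphism by means of a projective resolution and then move the Pontryagin dual through the resolution using two ingredients: the exactness of $(-)^\vee$ on $\mathrm{LCA}$ stated just above the lemma, and a tensor-hom adjunction that turns a dualized tensor product into a hom into the dual.

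Concretely, I would first choose a projective resolution $P_\bullet\twoheadrightarrow \Z_p$ of the trivial module in the category of (topological) $\Lambda(G)$-modules, where each $P_n$ is a free $\Lambda(G)$-module of finite rank. By definition of continuous group (co)homology in the profinite setting, one then has identifications
\begin{align*}
\Hrm_n(G,M) &= \Hrm_n\bigl(P_\bullet\widehat{\otimes}_{\Lambda(G)} M\bigr),\\
\Hrm^n(G,M^\vee) &= \Hrm^n\bigl(\Hom_{\Lambda(G)}(P_\bullet, M^\vee)\bigr),
\end{align*}
so the lemma will follow once I exhibit a natural isomorphism of complexes
$$\bigl(P_\bullet\widehat{\otimes}_{\Lambda(G)} M\bigr)^\vee \;\cong\; \Hom_{\Lambda(G)}(P_\bullet, M^\vee).$$

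The key step is therefore the degreewise tensor-hom adjunction. For each $n$, I use the fact that $\Hom_{\mathrm{cts}}(-,\Q/\Z)$ turns a completed tensor product into a continuous hom: more precisely, one has a natural isomorphism
$$\bigl(P_n\widehat{\otimes}_{\Lambda(G)} M\bigr)^\vee \;=\;\Hom_{\mathrm{cts}}\bigl(P_n\widehat{\otimes}_{\Lambda(G)} M,\Q/\Z\bigr)\;\cong\;\Hom_{\Lambda(G)}\bigl(P_n, \Hom_{\mathrm{cts}}(M,\Q/\Z)\bigr),$$
and the right-hand side is exactly $\Hom_{\Lambda(G)}(P_n, M^\vee)$. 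These isomorphisms are compatible with the boundary maps of $P_\bullet$, so they assemble into the desired isomorphism of complexes. Finally, because $(-)^\vee:\mathrm{LCA}\to\mathrm{LCA}$ is an exact (contravariant) equivalence, it commutes with taking (co)homology of a complex, which yields
$$\Hrm^n(G,M^\vee)\;\cong\; \Hrm^n\bigl((P_\bullet\widehat{\otimes}_{\Lambda(G)} M)^\vee\bigr)\;\cong\;\Hrm_n\bigl(P_\bullet\widehat{\otimes}_{\Lambda(G)} M\bigr)^\vee\;=\;\Hrm_n(G,M)^\vee.$$

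The main subtlety, and what I expect to be the principal obstacle, is bookkeeping of topologies: one must work consistently in a suitable subcategory of topological $\Lambda(G)$-modules (compact vs. discrete) where Pontryagin duality is exact and the completed tensor product interacts well with $\Hom_{\mathrm{cts}}$. Since $M$ is finitely generated over $\Lambda(G)$ (hence compact in the profinite topology) and $M^\vee$ is discrete $p$-torsion, both sides land in the right place, and the adjunction above is a standard consequence of the fact that $\Hom_{\mathrm{cts}}(-, \Q/\Z)$ is represented by the dualizing object $\Q/\Z$. Once this is set up cleanly, no further calculation is needed, and naturality of the adjunction in $P_\bullet$ guarantees that the isomorphism is canonical.
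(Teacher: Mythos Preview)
The paper does not actually prove this lemma; it is stated with a bare citation to Howson and no argument is given. Your proposal supplies the standard proof: resolve $\Z_p$ by finite-rank free $\Lambda(G)$-modules, invoke the tensor--hom adjunction $(P_n\otimes_{\Lambda(G)} M)^\vee\cong\Hom_{\Lambda(G)}(P_n,M^\vee)$ degreewise, and use exactness of Pontryagin duality to pass to (co)homology. This is correct and is essentially how the result is obtained in the cited reference.

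One small remark: since each $P_n$ is free of finite rank and $M$ is finitely generated over the noetherian ring $\Lambda(G)$, the completed tensor product $P_n\widehat{\otimes}_{\Lambda(G)}M$ agrees with the ordinary one, so the adjunction reduces to the trivial identification $(M^k)^\vee\cong(M^\vee)^k$ and no subtle topological bookkeeping is actually needed. Your cautionary paragraph about compact-vs-discrete is therefore more of a sanity check than a genuine obstacle.
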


\subsection{Eigenspaces} \label{eigenspaces} Let us consider $\hat{\Delta}$, the group of $p$-adic characters of $\Delta$, where $\Delta$ is a finite abelian group, and let $\Ocal$ be the $\Z_p$-algebra generated by the roots of unity of order dividing the exponent of $\Delta$. Also, given $\psi\in\hat{\Delta}$, we denote the $\Z_p$-algebra generated by the values of $\psi$ by $\Ocal_\psi$. 

Note that $\psi$ extends to a map $\Z_p[\Delta]\to\Ocal_\psi$, which in turn extends to a map $\Ocal[\Delta]\to\Ocal$. In particular, given an $\Ocal[\Delta]$-module, we define $A^\psi:=A\otimes_{\Ocal[\Delta]}\Ocal$ using the previous map, and define
$$e_\psi=\frac{1}{\sharp\Delta}\sum_{\delta\in\Delta}\psi(\delta)\delta^{-1}\in\mathrm{Frac}(\Ocal_\psi)[\Delta].$$
It is then easy to see that $e_\psi\Ocal[\Delta]=e_\psi\Ocal$, and thus we obtain the following result.
\begin{prop}{\rm (\autocite[Prop 2.8.6]{sharifi})} Assume $p$ does not divide $\sharp\Delta$. Then, for any $\Ocal[\Delta]$-module $A$, we have that $A^\psi=e_\psi A$, and there is a decomposition
$$A\cong \bigoplus_{\psi\in\hat{\Delta}}A^\psi.$$
\end{prop}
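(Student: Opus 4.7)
The plan is to reduce to an idempotent decomposition of the group ring $\Ocal[\Delta]$ itself, and then tensor with $A$. The hypothesis that $p\nmid\sharp\Delta$ ensures that $\sharp\Delta$ is a unit in $\Z_p\subseteq\Ocal$, and the assumption that $\Ocal$ contains the roots of unity of order dividing the exponent of $\Delta$ ensures that every $\psi\in\hat\Delta$ takes values in $\Ocal^\times$. Together these two facts guarantee that each $e_\psi$ lies in $\Ocal[\Delta]$ (not merely in $\mathrm{Frac}(\Ocal_\psi)[\Delta]$), so the whole argument takes place integrally.

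First I would verify by a direct change-of-variables in the defining sum that
$$\delta\cdot e_\psi = \psi(\delta)\,e_\psi\qquad \text{for all }\delta\in\Delta.$$
This immediately shows that $e_\psi\Ocal[\Delta]=\Ocal\cdot e_\psi$ is a free $\Ocal$-module of rank one on which $\Delta$ acts through $\psi$. Next, using the character orthogonality relations of the finite abelian group $\Delta$, namely
$$\sum_{\delta\in\Delta}\psi(\delta)\psi'(\delta)^{-1}=\sharp\Delta\cdot\delta_{\psi,\psi'},\qquad \sum_{\psi\in\hat\Delta}\psi(\delta)=\sharp\Delta\cdot\delta_{\delta,1},$$
one deduces the two identities
$$e_\psi e_{\psi'}=\delta_{\psi,\psi'}\,e_\psi,\qquad \sum_{\psi\in\hat\Delta}e_\psi=1,$$
which together say that the $\{e_\psi\}_{\psi\in\hat\Delta}$ form a complete system of orthogonal idempotents in $\Ocal[\Delta]$. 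Consequently
$$\Ocal[\Delta]\;=\;\bigoplus_{\psi\in\hat\Delta}e_\psi\,\Ocal[\Delta]\;=\;\bigoplus_{\psi\in\hat\Delta}e_\psi\,\Ocal,$$
an isomorphism of $\Ocal[\Delta]$-modules.

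For an arbitrary $\Ocal[\Delta]$-module $A$, the identity $A=\Ocal[\Delta]\otimes_{\Ocal[\Delta]}A$ together with the above decomposition immediately yields $A=\bigoplus_{\psi\in\hat\Delta}e_\psi A$. It remains only to identify $e_\psi A$ with $A^\psi=A\otimes_{\Ocal[\Delta]}\Ocal$, where $\Ocal$ is viewed as an $\Ocal[\Delta]$-module via $\psi$. But $e_\psi A = e_\psi\Ocal[\Delta]\otimes_{\Ocal[\Delta]}A$, and $e_\psi\Ocal[\Delta]\cong \Ocal$ (as $\Ocal[\Delta]$-modules, via $\psi$) by the computation of the previous paragraph, so $e_\psi A\cong \Ocal\otimes_{\Ocal[\Delta]}A=A^\psi$ canonically.

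There is no real obstacle here: the only delicate point is confirming that the coefficient $1/\sharp\Delta$ and all the character values $\psi(\delta)$ really lie in $\Ocal$, because everything else is formal manipulation of characters. This is exactly what the hypothesis $p\nmid\sharp\Delta$ together with the choice of $\Ocal$ is designed to supply, so once that observation is made, the rest of the proof is a bookkeeping exercise carried out in the paragraphs above.
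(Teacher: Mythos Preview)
Your proof is correct and follows the same line as the paper, which does not give a detailed argument but merely records the key observation $e_\psi\Ocal[\Delta]=e_\psi\Ocal$ and cites \autocite[Prop 2.8.6]{sharifi}. You have simply filled in the standard details (orthogonality relations, complete system of idempotents, identification of $e_\psi A$ with the base change $A\otimes_{\Ocal[\Delta]}\Ocal$), all of which are exactly what that hint and citation are pointing to.
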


In the case $\Ocal=\Z_p$, we define $A^{(\psi)}:=A\otimes_{\Z_p[\Delta]}\Ocal_\psi$, and note that in order to be able to define an analogue of $e_\psi$, we need to work over a field containing the images $\psi$. For that reason, we define
$$\tilde{e}_\psi = \frac{1}{\sharp\Delta}\sum_{\delta\in\Delta}\mathrm{Tr}_{\mathrm{Frac}(\Ocal_\psi)|\Q_p}(\psi(\delta))\delta^{-1}\in\Q_p[\Delta],$$
We write $\psi\sim\varphi$ whenever there exists $\sigma\in G_{\Q_p}$ so that $\psi = \sigma\circ\varphi$. Note that under this notation, we have that if $\psi\sim\varphi$, then $\Ocal_\psi = \Ocal_\varphi$. Thus, we can now state the relationship between working with $\Z_p[\Delta]$-modules and $\Ocal[\Delta]$-modules.

\begin{prop}{\rm (\autocite[Lem 2.8.14 \& Prop 2.8.15]{sharifi})} Let $A$ be a $\Z_p[\Delta]$-module.
\begin{enumerate}
\item One has that $A^{(\psi)}\otimes_{\Ocal_\psi}\Ocal\cong (A\otimes_{\Z_p}\Ocal)^\psi$, and if $p$ does not divide $\sharp\Delta$, then
 $$A^{(\psi)}\otimes_{\Z_p}\Ocal\cong \bigoplus_{\varphi\in[\psi]}(A\otimes_{\Z_p}\Ocal)^\varphi.$$
\item If $p$ does not divide $\sharp\Delta$, then 
$$A\cong \bigoplus_{[\psi]\in\hat{\Delta}/\sim}A^{(\psi)}$$	
\end{enumerate}
\end{prop}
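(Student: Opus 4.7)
The plan is to reduce both parts to the orbit decomposition
\[
\Z_p[\Delta]\;\cong\;\prod_{[\psi]\in\hat{\Delta}/\sim}\Ocal_\psi,
\]
valid whenever $p\nmid\sharp\Delta$. This I would first establish by base-changing to $\Ocal$, where the previous Proposition gives $\Ocal[\Delta]\cong\prod_{\psi\in\hat{\Delta}}\Ocal$ (Maschke, since $\Ocal$ contains enough roots of unity and $p\nmid\sharp\Delta$), and then descending along $G_{\Q_p}$-fixed points: the action permutes the factors through its action on $\hat{\Delta}$, and each orbit $[\psi]$ contributes the subring of $\Ocal$ fixed by the stabilizer of $\psi$, which by definition of $\Ocal_\psi$ is exactly $\Ocal_\psi$. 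Equivalently, the decomposition is witnessed by the orthogonal central idempotents $\tilde{e}_\psi\in\Z_p[\Delta]$ defined above, whose trace-over-$\Q_p$ construction makes sense precisely because $\sharp\Delta$ is invertible in $\Z_p$.

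For part (1), the first isomorphism is tensor-product bookkeeping. Associativity gives
\[
A^{(\psi)}\otimes_{\Ocal_\psi}\Ocal=\bigl(A\otimes_{\Z_p[\Delta]}\Ocal_\psi\bigr)\otimes_{\Ocal_\psi}\Ocal\;\cong\;A\otimes_{\Z_p[\Delta]}\Ocal,
\]
with $\Ocal$ viewed as a $\Z_p[\Delta]$-module through $\Z_p[\Delta]\to\Ocal_\psi\hookrightarrow\Ocal$, i.e., through the extension of $\psi$ to $\Ocal[\Delta]\to\Ocal$. The same associativity applied to the right-hand side yields
\[
(A\otimes_{\Z_p}\Ocal)^\psi=(A\otimes_{\Z_p}\Ocal)\otimes_{\Ocal[\Delta]}\Ocal=A\otimes_{\Z_p[\Delta]}\Ocal[\Delta]\otimes_{\Ocal[\Delta]}\Ocal=A\otimes_{\Z_p[\Delta]}\Ocal,
\]
matching the previous expression. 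The second formula of (1) then follows from the étale decomposition $\Ocal_\psi\otimes_{\Z_p}\Ocal\cong\prod_{\varphi\in[\psi]}\Ocal$, whose factors are indexed by the embeddings of $\Ocal_\psi$ into $\Ocal$ (equivalently, by characters $\varphi=\sigma\circ\psi\sim\psi$): tensoring with $A\otimes_{\Z_p[\Delta]}(-)$ and applying the first isomorphism factor by factor recovers $\bigoplus_{\varphi\in[\psi]}(A\otimes_{\Z_p}\Ocal)^\varphi$.

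For part (2), I tensor $A$ with the orbit decomposition of $\Z_p[\Delta]$; since the index set is finite, the tensor product commutes with the product, giving
\[
A\cong A\otimes_{\Z_p[\Delta]}\prod_{[\psi]}\Ocal_\psi\cong\bigoplus_{[\psi]}A\otimes_{\Z_p[\Delta]}\Ocal_\psi=\bigoplus_{[\psi]}A^{(\psi)}.
\]

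The main obstacle is the descent step producing the decomposition of $\Z_p[\Delta]$ itself; once in hand, everything else is associativity of tensor products plus the standard étale base change $\Ocal_\psi\otimes_{\Z_p}\Ocal\cong\prod_{\varphi\in[\psi]}\Ocal$. A more self-contained alternative is to check directly that the $\tilde{e}_\psi$ are central, mutually orthogonal, sum to $1$, and satisfy $\tilde{e}_\psi\Z_p[\Delta]\cong\Ocal_\psi$, which avoids explicit Galois descent but still uses $p\nmid\sharp\Delta$ in an essential way.
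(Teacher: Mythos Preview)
The paper does not give its own proof of this proposition; it simply records the statement and cites Sharifi's lecture notes \autocite[Lem~2.8.14 \& Prop~2.8.15]{sharifi}. So there is nothing in the paper to compare your argument against, and your proposal should be judged on its own.

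Your argument is correct and is in fact the standard one. The associativity manipulation for the first isomorphism in (1) is clean and does not require $p\nmid\sharp\Delta$, as stated. For the second isomorphism in (1), the key input $\Ocal_\psi\otimes_{\Z_p}\Ocal\cong\prod_{\varphi\in[\psi]}\Ocal$ holds because $\Ocal_\psi/\Z_p$ is unramified when $p\nmid\sharp\Delta$; you then correctly note that each factor carries the $\Ocal_\psi$-structure through a different embedding $\sigma$, so that the first isomorphism applied factor-by-factor yields $(A\otimes_{\Z_p}\Ocal)^{\sigma\circ\psi}=(A\otimes_{\Z_p}\Ocal)^\varphi$. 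For (2), your Galois-descent computation of $\Z_p[\Delta]\cong\prod_{[\psi]}\Ocal_\psi$ is accurate: the $G_{\Q_p}$-action on $\Ocal[\Delta]\cong\prod_\psi\Ocal$ permutes the idempotents $e_\psi\mapsto e_{\sigma\psi}$ while simultaneously acting on coefficients, and the stabilizer of $\psi$ is exactly $\Gal(\mathrm{Frac}(\Ocal)/\mathrm{Frac}(\Ocal_\psi))$, giving $\Ocal_\psi$ as the contribution of each orbit. The alternative route via the idempotents $\tilde e_\psi$ is equally valid and is closer to how Sharifi presents it.
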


\subsection{Twists and characteristic ideals} Our goal now is to see what is the effect of twisting a module on its characteristic ideal. In order to do so, let $\chi:G_F\to\Ocal^\times$ be a character (possibly of infinite order), and define the $\Ocal$-linear isomorphism $\Tw_\chi:\Lambda\to\Lambda$ given by the rule $\gamma\mapsto \chi(\gamma)\gamma$. We have the following result, which is possible due to \ref{TorsionPreservation}. 

\begin{lem}
	{\rm (\autocite[\S VI, Lemma 1.2]{rubin2014euler}} Let $M$ be a finitely generated and torsion $\Lambda$-module. Then, 
	$$\Tw_\chi\left(\Ch_\Lambda(M(\chi))\right)=\Ch_\Lambda(M).$$
\end{lem}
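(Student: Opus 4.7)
The plan is to realize $M(\chi)$ as a ``twist'' of the $\Lambda$-module structure on $M$ itself and then transfer the structure theorem decomposition across the ring automorphism $\Tw_\chi$. Concretely, for $\gamma\in\Gamma$ and $m\otimes x\in M\otimes_\Ocal\Ocal(\chi)$, the definition of the twisted action gives $\gamma\cdot(m\otimes x)=\chi(\gamma)(\gamma m\otimes x)$, so the action of $\gamma\in\Lambda$ on $M(\chi)$ coincides with the action of $\Tw_\chi(\gamma)=\chi(\gamma)\gamma$ on $M$. Extending $\Ocal$-linearly and continuously, the first key observation is that $M(\chi)$, as a $\Lambda$-module, is the pullback $\Tw_\chi^*M$ of $M$ along $\Tw_\chi:\Lambda\to\Lambda$.

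Next, I would apply the structure theorem \eqref{structuretheorem} to $M$ to obtain a quasi-isomorphism $M\to N$ with $N=\bigoplus_n\Lambda/\varpi^{\mu_n}\oplus\bigoplus_m\Lambda/(f_m^{\lambda_m})$. Restriction of scalars along the ring isomorphism $\Tw_\chi$ is exact and preserves both finite generation and finiteness of kernels and cokernels, so the induced map $M(\chi)\to\Tw_\chi^*N$ is again a quasi-isomorphism of $\Lambda$-modules. The key computation is then to identify each elementary summand: $\Tw_\chi$ itself, viewed as a map $\Lambda/\Tw_\chi^{-1}((g))\to\Lambda/(g)$, is an additive bijection that is $\Lambda$-linear for the pulled-back action, since for $\lambda,y\in\Lambda$ one has $\Tw_\chi(\lambda y)=\Tw_\chi(\lambda)\Tw_\chi(y)=\lambda\cdot_{\Tw_\chi^*}\Tw_\chi(y)$. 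Hence $\Tw_\chi^*(\Lambda/(g))\cong\Lambda/\Tw_\chi^{-1}((g))$. Since $\Tw_\chi$ fixes $\Ocal\subset\Lambda$, and in particular fixes $\varpi$, the $\mu$-part of the decomposition is unaffected.

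Reading off the characteristic ideal from the decomposition $\Tw_\chi^*N\simeq\bigoplus_n\Lambda/\varpi^{\mu_n}\oplus\bigoplus_m\Lambda/\Tw_\chi^{-1}((f_m^{\lambda_m}))$, I would conclude
$$\Ch_\Lambda(M(\chi))=\varpi^{\mu(M)}\prod_m\Tw_\chi^{-1}\bigl((f_m^{\lambda_m})\bigr)=\Tw_\chi^{-1}\bigl(\Ch_\Lambda(M)\bigr),$$
and applying the automorphism $\Tw_\chi$ to both sides yields the claimed identity. Lemma \ref{TorsionPreservation} is what guarantees that $\Ch_\Lambda(M(\chi))$ is even defined, as it ensures $M(\chi)$ remains finitely generated and torsion over $\Lambda$.

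The main obstacle is really just bookkeeping: keeping track of whether $\Tw_\chi$ or $\Tw_\chi^{-1}$ appears at each step, and confirming that pseudo-nullity (finiteness of kernel/cokernel) is preserved by restriction of scalars along $\Tw_\chi$ so that the quasi-isomorphism survives the twist. Everything else reduces to unwinding definitions once the identification $M(\chi)=\Tw_\chi^*M$ is in hand.
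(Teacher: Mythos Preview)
Your argument is correct and is essentially the standard proof: identify $M(\chi)$ with the restriction of scalars $\Tw_\chi^\ast M$, transport the structure-theorem decomposition through the ring automorphism, and read off the characteristic ideal. The paper itself does not supply a proof of this lemma but simply cites \autocite[\S VI, Lemma 1.2]{rubin2014euler}, where exactly this approach is taken, so there is nothing further to compare.
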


\begin{exm}\label{characteristic power series twist description}
Let us restrict our attention to the case where $F=\Q$. Then, if we let $M$ be a finitely generated torsion $\Lambda$-module, we can conclude from the previous result that, after a choice of a generator $\gamma$ of $\Gamma$, the Iwasawa-Serre isomorphism produces an equality
$$\mathrm{ch_\Lambda}\left(M(-n)\right)(T)\equiv\ch_{\Lambda}(M)(u^n(T+1)-1)\mod\Z_p^\times,$$
which holds for any pair of characteristic elements for $M$ and $M(n)$ respectively, where $u=\chi_\cyc(\gamma)$. 
\end{exm}

\subsection{Euler characteristics of Iwasawa modules} Finally, we introduce Euler characteristics of Iwasawa modules. 

\begin{dfn} Let $M$ be a finitely-generated $\Lambda(G)$-module, for $G$ the Galois group of a $\Z_p^d$-extension. Then, $M$ is said to have {\it finite $G$-Euler characteristic} if all the (group) homology groups 	$\Hrm_i(G,M)$ are finite for all $0\leq i\leq d$. Furthermore, if $M$ has finite $G$-Euler charactersistic, then the {\it $G$-Euler characteristic of $M$} is defined as the product
$$\mathrm{EC}(G,M):=\prod_{0\leq i\leq d}\sharp\Hrm_i(G,M)^{(-1)^i}.$$
\end{dfn}

Note that given a finitely generated $\Lambda=\Lambda(\Gamma)$-module, one can define analogously its $\Gamma$-Euler characteristic, and the following result illustrates the relation between both modules. The result can be found in, e.g, \autocite[(31), p.14]{coates2012noncommutative}

\begin{thm}
	Given a finitely generated $\Lambda(G)$-module $M$, $M$ has finite $G$-Euler characteristic if and only if the $\Gamma$-modules $\Hrm_i(\Gamma,M)$ have finite $\Gamma$-Euler characteristic for all $0\leq i< d$. Whenever any of the two equivalent conditions holds, one also has the equality
	$$\mathrm{EC}(G,M)=\prod_{0\leq i<d}\mathrm{EC}(\Gamma,\Hrm_i(\Gamma,M))^{(-1)^i}.$$
\end{thm}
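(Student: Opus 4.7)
The plan is to apply the Lyndon--Hochschild--Serre spectral sequence attached to the short exact sequence
\[
1\to N\to G\to \Gamma\to 1,
\]
where $N$ is the kernel of a chosen $\Z_p$-quotient $\Gamma$ of $G$. The paper's statement overloads the symbol $\Gamma$: it plays two a~priori different roles, namely the subgroup along which $\Hrm_i(-,M)$ is computed and the ambient group on whose cohomology $\mathrm{EC}$ is then taken. The Hochschild--Serre machinery handles both at once after identifying $N$ with the ``inner'' $\Gamma$. Explicitly, the spectral sequence reads
\[
E^2_{p,q}=\Hrm_p(\Gamma,\Hrm_q(N,M))\Longrightarrow \Hrm_{p+q}(G,M),
\]
with the $\Gamma$-action on $\Hrm_q(N,M)$ induced from the $G$-action on $M$ via the quotient $G\twoheadrightarrow\Gamma$. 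Because $\Gamma\simeq\Z_p$ has cohomological dimension $1$ and $N\simeq\Z_p^{d-1}$ has cohomological dimension $d-1$, the $E^2$-page is supported in the finite rectangle $0\leq p\leq 1$, $0\leq q\leq d-1$.

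I would tackle the ``if'' direction first, since it is cleaner. Assume each $\Hrm_q(N,M)$ has finite $\Gamma$-Euler characteristic, so every $E^2_{p,q}$ is a finite abelian group. In this two-row situation the differentials $d_r$ for $r\geq 2$ automatically vanish: a source $(p,q)$ with $p\in\{0,1\}$ cannot hit a target $(p-r,q+r-1)$ with non-negative first coordinate. Hence $E^2=E^\infty$, so $\#\Hrm_n(G,M)=\prod_{p+q=n}\#E^\infty_{p,q}$ is finite, and taking the alternating product over $n$ and regrouping yields
\[
\mathrm{EC}(G,M)=\prod_{p,q}(\#E^2_{p,q})^{(-1)^{p+q}}=\prod_{q=0}^{d-1}\mathrm{EC}(\Gamma,\Hrm_q(N,M))^{(-1)^q},
\]
which is the stated equality.

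For the converse, if every $\Hrm_n(G,M)$ is finite, then each graded piece $E^\infty_{p,n-p}$ of the two-step filtration on $\Hrm_n(G,M)$ is a subquotient of a finite group, hence finite. Since $E^2=E^\infty$ in this range by the same degeneration argument, each $\Hrm_p(\Gamma,\Hrm_q(N,M))$ is finite for $p\in\{0,1\}$ and $0\leq q\leq d-1$. For a finitely generated $\Lambda(\Gamma)$-module $X$ with $\Gamma\simeq\Z_p$, finiteness of both $\Hrm_0(\Gamma,X)$ and $\Hrm_1(\Gamma,X)$ guarantees that $\mathrm{EC}(\Gamma,X)$ is a well-defined ratio of positive integers, which gives the claim.

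The main obstacle I expect is exactly this converse direction, where individual terms of the spectral sequence must be controlled rather than only their alternating product; cancellations built into Euler characteristics can otherwise conceal infinite pieces. The two-row structure of the spectral sequence is what saves the argument, forcing $E^2=E^\infty$ without further input. If one later wishes to treat a ``broader'' $\Gamma$ of rank larger than one, the clean path is to peel off $\Z_p$-quotients one at a time and apply the present statement inductively until one reduces to the trivial base case $G=\Gamma$.
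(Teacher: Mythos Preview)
The paper does not prove this theorem; it merely cites it from \cite{coates2012noncommutative}. Your Hochschild--Serre argument is the standard proof in this setting and is correct. You are also right that the statement as printed overloads the symbol $\Gamma$: the inner occurrence must be the kernel $N=\ker(G\twoheadrightarrow\Gamma)\simeq\Z_p^{d-1}$, since otherwise the index range $0\leq i<d$ and the outer $\mathrm{EC}(\Gamma,-)$ would not match.

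Two small tightenings. First, to conclude $E^2=E^\infty$ termwise you need both directions: you checked that no $d_r$ with $r\geq 2$ leaves the strip $p\in\{0,1\}$, but you should also remark that none enters it, since any source would have first coordinate $p+r\geq 2$, outside the support. Second, in the converse you invoke finite generation of $\Hrm_q(N,M)$ over $\Lambda(\Gamma)$; this is true (it follows from noetherianity of $\Lambda(G)$ and the Koszul resolution for $N$), but it is not needed: by the paper's definition, finite $\Gamma$-Euler characteristic simply means $\Hrm_0(\Gamma,-)$ and $\Hrm_1(\Gamma,-)$ are finite, which you have already established from $E^2=E^\infty$.
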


\newpage
\section{Background in homotopy theory}\label{htpy theory bg section}

The following section will serve the purpose of illustrating the subject of stable homotopy theory using the language of $\infty$-categories. By no means this intends to be a precise introduction, our main goal here is to allow the reader that is not familiarized with the language to follow the rest of this paper up to certain technicalities. The main references are \cite{lurie2009higher} and \cite{lurie2016higher}.

\subsection{The language of $\infty$-categories} For our purposes in this paper, we can think of an {\it $\infty$-category} as a category that is weakly enriched over spaces, in the sense that there is a {\it space} of maps between every pair of objects, and the composition law is defined only up to {\it coherent} homotopy. Homotopy coherence is the problem one faces when trying to lift diagrams in the homotopy category of spaces to the ordinary category of spaces. It is easy to see that the existence of such a lift is equivalent to the existence of certain higher-degree homotopies, and this is precisely the problem that $\infty$-categories tackle, as we are asking their sets of maps to be spaces, and therefore contain homotopical information in all degrees.  Although we are asking for the composition law to be only unique up to coherent homotopy, which is a much stronger condition than being unique up to homotopy, this is still sufficient to provide a robust theory of $\infty$-categories, in which all constructions for $1$-categories (functors, limits, colimits...) can also be performed. 

\begin{rem}
One can also look at the weaker concept of an {\it $\infty$-groupoid}, whose main difference with $\infty$-categories is that all maps admit an inverse. As explained in \autocite[Example 1.1.1.4]{lurie2009higher}, one can think of $\infty$-groupoids as categories behaving as spaces, and from it one defines the $\infty$-category of spaces $\Scal$, which is the $\infty$-categorical analogue of the classical category of spaces.
\end{rem}

\subsection{Stable $\infty$-categories} We will be working with {\it stable $\infty$-categories} most of the time. These are pointed\footnote{An $\infty$-category is said to be pointed if it admits a zero object, that is, an object that is both initial and final.} $\infty$-categories with finite limits, and whose loop functor gives an equivalence\footnote{The condition of being pointed and having finite limits is enough to construct a loop space functor $\Omega$ and a suspension functor $\Sigma$. Furthermore, the stability condition implies that the adjunction $(\Sigma,\Omega)$ gives an equivalence.}. In addition, there is a precise notion of fiber and cofiber sequences just as in classical homotopy theory. By \autocite[Theorem 1.1.2.14]{lurie2016higher}, the homotopy category of every stable $\infty$-category admits a triangulated structure, which illustrates another reason why one would be interested in working the $\infty$-categorical setting: the complexity of triangulated categories could be justified by its being a ``decategorification" of a structure that is natural in higher category theory. 
 
 Moreover, the homotopy category of a stable $\infty$-category $\Ccal$ can be endowed with a $t$-structure, although it might not be unique. Roughly, a $t$-structure is the additional data in the homotopy category of $\Ccal$ that axiomatizes the phenomenon of truncation of complexes and spaces and, therefore, taking cohomology or homotopy groups. From a $t$-structure, one can obtain the {\it heart} $\Ccal^\heartsuit$ of $\Ccal$. This is an abelian subcategory of the homotopy category of $\Ccal$ which, in the case of the derived $\infty$-category\footnote{The derived $\infty$-category admits a canonical $t$-structure.}, can be regarded as those complexes concentrated in degree 0. A consequence of this construction is that fiber sequences in $\Ccal$ induce long exact sequences in $\Ccal^\heartsuit$ after taking homotopy groups.

\subsection{The $\infty$-category of Spectra} For our purposes, we will think of the $\infty$-category of spectra, which is denoted by $\Sp$, as the higher analogue of the 1-category of abelian groups. More precisely, there exist adjoint functors $\Omega^\infty$ and $\Sigma^\infty_+$ giving rise to the following diagram
$$
\xymatrix@C=8em{
\Ab \ar@{->}@/_/[r] \ar@/^/[d]^{\mathrm{forget}} & \Sp \ar@/_/[l]_{\pi_0} \ar@/^/[d]^{\Omega^\infty} \\
\Setcal \ar@/^/[u]^{\Z(-)} \ar@{->}@/_/[r] & \Scal. \ar@/^/[u]^{\Sigma^\infty_+} \ar@/_/[l]_{\pi_0}
}
$$
\begin{rem} The functor $\pi_0:\Sp\to\Ab$ comes from the fact that $\Sp$ admits a canonical $t$-structure, and its heart can be naturally identified with (the nerve of) the 1-category of abelian groups. At the same time, this implies that any fibration in $\Sp$ gives rise to a long exact sequence of abelian groups just as in classical homotopy theory. However, note that in this case one can take negative homotopy groups.\end{rem}

An alternative way to visualize $\Sp$ is given by Brown's representability theorem \autocite[Theorem 1.4.1.2]{lurie2016higher}, which roughly states that any generalized cohomology theory is representable by a spectrum. Thus, $\Sp$ can be seen as the $\infty$-category containing all cohomology theories, although we warn the reader that the embedding is not always faithful. Furthermore, given a space $X$, one can define $E^n(X)=[\Omega^n E,\Sigma^\infty_+X]$, and check that the assignment $X\mapsto E^n(X)$ gives a generalized cohomology theory. In fact, if $X$ is also a spectrum, one can define $E^n(X)=[\Omega^nE,X]$.
\begin{exm}
We give a list of some spectra that we will use during this article.
\begin{enumerate}[nosep]
\item A central object in the study of spectra is the {\it sphere spectrum}, which is the $\infty$-suspension of the point, $\Sbb=\Sigma_+^\infty(\ast)$. This object is sometimes referred to as the {\it stable sphere}, and the reason is that its homotopy groups agree with the stable homotopy groups of spheres.
\item As in the classical case, given an abelian group $G$, one can always construct its {\it Eilenberg-MacLane spectrum}, denoted by $HG$, although when the context of spectra is understood, we will simply denote it by $G$. It is characterized by the fact that its homotopy groups are concentrated in degree 0.
\item We will also use the {\it Moore spectrum} $SG$, which is characterized by having $\pi_iSG=0$ for $i<0$, $\pi_0SG=G$, and $\pi_i(SG\otimes H\Z)=H_i(SG;\Z)=0$ for $i>0$. The importance of this spectrum will become visible in the Bousfield localization section below, where we will work with {\it non-connective spectra}, that is, spectra with homotopical information in negative degrees.
\item Given a spectrum $X$, one can associate to it its {\it $K$-theory spectrum} $K(X)$. In particular, when $X=HR$ for some ring $R$, $\pi_n K(R)\simeq K_n(R)$, where the right hand side denotes the classical $n$-th K-theory group.
\end{enumerate}
\end{exm}

Finally, just as the category of abelian groups admits a symmetric monoidal category structure, one can endow $\Sp$ with a symmetric monoidal $\infty$-category structure given by the smash product $\otimes$ and whose unit is the sphere spectrum. This provides the possibility of forming algebraic structures over $\Sp$, whence the name {\it Higher Algebra}. 
\begin{center}
\begin{tabular}{c|c}
Ordinary Algebra & $\infty$-Categorical Algebra\\
\hline Set & Space \\
Abelian group & Spectrum \\
Tensor product of abelian groups & Smash product of spectra\\
Associative ring & $\E_1$-ring \\
Commutative ring & $\E_\infty$-ring
\end{tabular}\footnote{This table was obtained from \autocite[Chapter 7]{lurie2016higher}.}
\end{center}

\subsection{Bousfield localization of spectra and Iwasawa theory} Let us fix a spectrum $E\in\Sp$. Another spectrum $X$ is said to be {\it $E$-acyclic} if $X\otimes E\simeq 0$. On the other hand, $X$ is said to be {\it $E$-local} if $Y\to X$ is null-homotopic (i.e, factors through a zero object) whenever $Y$ is acyclic. Then, there exists a functor $L_E$ from $\Sp$ called {\it Bousfield localization}, which is characterized up to equivalence by the following two properties, namely that  $L_E X$ is $E$-local and that there is a natural map $X\to L_E X$ and its fiber is $E$-acyclic. We then define the {\it $p$-adic completion} $X_p^\wedge$ of a spectrum $X$ to be the Bousfield localization of $X$ with respect to the Moore spectrum $S\Z/p$. 

Now, let $X$ be a space, and let $\KU^\ast(X)$ be the ring of (periodic) complex $K$-theory. Since $\KU^\ast$ defines a generalized cohomology theory, it corresponds to a spectrum $\KU$ by Brown's theorem. We refer to $\KU$ as the {\it complex $K$-theory spectrum}. Finally, we define the {\it $K(1)$-local $K$-theory spectrum} as $L_{K(1)}K=\KU_p^\wedge$.

\begin{rem}
In fact, the $K(1)$-local adjective comes from the first Morava $K$-theory spectrum, which satisfies that $L_{K(1)}K(X)\simeq\KU(X)_p^\wedge$. In general, one can construct the $n$-th Morava $K$-theory spectrum $K(n)$, which is a central tool in stable homotopy theory.
\end{rem}

As explained in \cite{Mitchell2005K1LocalHT}, $K(1)$-local $K$-theory already has a direct relation with Iwasawa theory: One can identify the group of Adams operations on $K(1)$-local $K$-theory with $\Gal(\Q_\infty|\Q)$ via the cyclotomic character and obtain $(L_{K(1)}K)^\ast(L_{K(1)}K)\simeq\Lambda(\Gamma)[\Delta]$.

\newpage
\section{The commutative and totally real case}\label{CTRsection}

From now on, let us fix an odd prime $p$ and let $F$ be a totally real number field. Let $\Sigma$ denote a finite set of primes of $F$ containing all primes dividing $p$ and let $F_\Sigma$ denote the maximal extension of $F$ which is unramified outside $\Sigma$ and the infinite places of $F$. Also, given an extension $L$ of $F$ inside $F_\Sigma$, we denote the maximal abelian $p$-extension of $L$ contained inside $F_\Sigma$ by $M(L)$, and its Galois group over $L$ by $X(L)=\Gal(M(L)|L)$. 
\begin{dfn}\label{tradmissible}
 A Galois extension $F_\infty|F$ is said to be a {\it totally real admissible $p$-adic Lie extension} if
 \begin{enumerate}[nosep]
 \item $F_\infty$ is totally real.
 \item The Galois group of $F_\infty|F$ is a $p$-adic Lie group, that is, a profinite topological group of finite rank containing an open pro-$p$ subgroup.
 \item $F_\infty|F$ is unramified outside $\Sigma$.
 \item $F_\infty$ contains $F^\cyc$, the maximal $\Z_p$-extension of $F$ contained in $F(\mu_{p^\infty})$.	
 \end{enumerate}
 \end{dfn}
 Finally, and following \autocite[p. 6]{coates2012noncommutative}, we will work under the assumption that $G=\Gal(F_\infty|F)$ is an abelian $p$-adic Lie group of dimension equal to one -- which, according to Leopoldt's conjecture should always be the case --, and denote  $H=\Gal(F_\infty|F^\cyc)$, which is a finite group under our running hypotheses.

\subsection{The main conjecture} Let us start by noting that if we denote $\Gamma=\Gal(F^\cyc|F)$,  $G$ can be written as $G =H\times\Gamma$. If we denote by $K$ the fixed field of $\Gamma$, which satisfies that $K\cap F^\cyc=F$, then $F_\infty$ can be written as the composite field $K F^\cyc$. Moreover, we will refer to the $p$-adic cyclotomic character $\chi_\cyc:\Gal(F(\mu_{p^\infty})|F)\to\Z_p^\times$ in a slightly unconventional way, namely as a character of $G$ that is trivial on $H$, given that $\Gamma$ is a subgroup of $\Gal(F(\mu_{p^\infty})|F)$ and thus we can look at the restriction of $\chi_\cyc$ to $\Gamma$, as it was commented in \ref{cyclotomic char remark}. 

Following the notations from \ref{eigenspaces}, let $\psi\in\hat{H}\setminus\{\idbf\}$ be of {\it type S}, that is, so that $F_\psi$, the field cut out by $\ker(\psi)$, or perhaps more precisely, the minimal extension of $F$ through which $\psi$ factors, is totally real and $F_\psi\cap F^\cyc=F$. For simplicity, we will also assume that the order of $\psi$ is coprime to $p$. Furthermore, fix a topological generator $\gamma\in\Gamma$, whose image through the cyclotomic character $\chi_\cyc(\gamma)$ we denote by $u$,

Now, let $L^\Sigma(\psi,s)$ denote the $L$-function with Euler factors at $\Sigma$ in the Euler product expression removed, whose rationality is justified in \autocite[(1.2)]{Wiles90} by using a result by Klingen-Siegel. We then have the following theorem.
\begin{thm}
\label{DR}    {\rm (Deligne-Ribet \cite{Deligne1980}, Cassou-Nogu\`es \cite{Cassou1979}, Barsky \cite{GAU_1977-1978__5__A9_0})} Let $n\geq 1$ be an integer with $n\equiv 0\mod[F(\mu_p):F]$. There exists a unique formal power series $L_p(\psi)\in\Ocal_\psi\llbracket T\rrbracket$ such that
$$L_p(\psi,u^n-1)=L^\Sigma(\psi,1-n).$$
\end{thm}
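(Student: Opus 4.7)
The plan is to split the theorem into uniqueness---which reduces to a Weierstrass-type argument---and existence, which is the technical heart and requires constructing an $\Ocal_\psi$-valued measure on $\Gamma$ whose moments interpolate the complex $L$-values.

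For uniqueness, the Iwasawa--Serre isomorphism identifies $\Ocal_\psi\llbracket T\rrbracket$ with $\Ocal_\psi\llbracket\Gamma\rrbracket$ via $T\leftrightarrow\gamma-1$, under which evaluation at $T=u^n-1$ corresponds to applying the continuous character $\chi_\cyc^n|_\Gamma$ to $\Gamma$. Any nonzero $f\in\Ocal_\psi\llbracket T\rrbracket$ admits a Weierstrass factorization $f=\varpi^\mu P(T)U(T)$ with $P$ a distinguished polynomial and $U$ a unit, hence vanishes at only finitely many points of the maximal ideal $\mfrak$. Since $u\in 1+p\Z_p$ has infinite multiplicative order, the set $\{u^n-1 : n\geq 1,\ n\equiv 0 \bmod [F(\mu_p):F]\}$ is an infinite subset of $\mfrak$, so any two candidates for $L_p(\psi)$ must coincide.

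For existence, my plan is to produce a bounded $\Ocal_\psi$-valued distribution $\mu_\psi$ on $\Gamma$ whose moments satisfy $\int_\Gamma\chi_\cyc^n\, d\mu_\psi = L^\Sigma(\psi,1-n)$; under the canonical identification of $\Ocal_\psi\llbracket\Gamma\rrbracket$ with the space of $\Ocal_\psi$-valued measures on $\Gamma$, such a $\mu_\psi$ corresponds precisely to a power series with the required interpolation property. The classical values $L^\Sigma(\psi,1-n)$ are rational by the invocation of Klingen--Siegel, and the hypothesis that $\psi$ has order coprime to $p$ allows the $\psi$-isotypic projection via the idempotent $e_\psi$ from \S\ref{eigenspaces} to land in $\Ocal_\psi$ without introducing denominators. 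The normalization of removing Euler factors at $\Sigma$ is precisely what is needed to compensate for the trivial zeros so that the resulting distribution can be shown to take values in $\Ocal_\psi$.

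The main obstacle is establishing the Kummer--Deligne--Ribet congruences that ensure $p$-adic boundedness of the distribution defined by these special values. Following Deligne--Ribet, one constructs a $p$-adic family of Hilbert modular Eisenstein series on the Hilbert modular variety attached to $F$ whose constant terms at the cusps encode partial zeta values attached to ray class characters, and then invokes the $q$-expansion principle to transfer congruences between $q$-expansion coefficients---where integrality is transparent---into the required congruences between the constant terms, that is, between the zeta values themselves. Alternatively, following Cassou-Noguès--Barsky, one can use Shintani cone decompositions to write each partial zeta value as a finite sum of generalized Hurwitz-zeta expressions, from which the congruences can be extracted through generating function manipulations. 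The arithmetic geometry of Hilbert modular varieties---in particular controlling denominators at the cusps in the $q$-expansion principle---is the central technical difficulty of the first route, while the combinatorics of fundamental Shintani domains and the uniform control of their contributions is the main hurdle in the second.
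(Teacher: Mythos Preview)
The paper does not prove this theorem at all: it is stated as a background result and attributed by citation to Deligne--Ribet, Cassou-Nogu\`es, and Barsky, with no argument given. Your proposal is therefore not a comparison target in the usual sense---you have supplied a high-level sketch of the proofs in the cited references, which is strictly more than the paper offers. The uniqueness argument via Weierstrass preparation and the infinitude of the interpolation points is correct and standard; your description of the two existence routes (Hilbert modular Eisenstein series plus the $q$-expansion principle following Deligne--Ribet, versus Shintani cone decompositions following Cassou-Nogu\`es and Barsky) accurately reflects the literature. If anything, note that your sketch is only an outline: the genuine content lies in the congruences you allude to, and turning either route into a proof requires substantial work that you have (appropriately) flagged as the main obstacle rather than carried out.
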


\begin{rem}
As explained in \cite{Wiles90}, the theorem can also be stated for $\idbf$ and characters of type $W$, that is, characters in which $F_\psi\subseteq F^\cyc$, multiplying the left hand side by $(\psi(\gamma)u^n-1)^{-1}$ but for simplicity we only focus on the type $S$ case. 
\end{rem}

\begin{rem}
As explained in \autocite[p. 7]{coates2012noncommutative} the theorem can also be stated regarding $L_p(\psi)$ as a {\it pseudo-measure} on $G$, so that if $n>0$ is an integer with $n\equiv 0\mod[F(\mu_p):F]$, 
$$L_p(\psi,\chi_\cyc^n):=\int_G \left(\psi\otimes\chi_\cyc^n\right) dL_p(\psi)=L^\Sigma(\psi,1-n).$$
This is why we will denote $L_p(\psi,\chi_\cyc^n)$ and $L_p(\psi,u^n-1)$ indistinctly.
\end{rem}

For the rest of this section, let us fix $\psi$ of type $S$; denote $X=X(F_\infty)$, and let $X_\psi=e_\psi(X\otimes_{\Z_p}\Ocal_\psi)$. By a classical theorem of Iwasawa, $X_\psi$ is finitely generated and torsion as an $\Ocal_\psi\llbracket\Gamma\rrbracket$-module, and therefore we can look at a characteristic power series $C_\psi:=\ch(X_\psi)\in\Ocal_\psi\llbracket T\rrbracket$ for $X_\psi$. It is easy to check that 
\begin{align}\label{DeterminantFormula}C_\psi(T)\Ocal_\psi\llbracket T\rrbracket =\varpi_\psi^{\mu_\psi}\det\left(T-(\gamma-1)|X_\psi\otimes F_\psi\right)\Ocal_\psi\llbracket T\rrbracket ,\end{align}
where $\varpi_\psi$ is any fixed local parameter for $\Ocal_\psi$, and $\mu_\psi$ is the algebraic $\mu$-invariant, which by the work of Wiles agrees with the analytic invariant obtained after applying the Weierstrass preparation theorem to $L_p(\psi,T)$ above. With this, one can state the algebraic part of the IMC as follows.

\begin{thm}
{\rm (IMC for totally real fields)} \label{AbelianIMCtotallyreal} For all $\psi\in\hat{H}$ of order prime to $p$, 
$$L_p(\psi,T)\Ocal_\psi\llbracket T\rrbracket = C_\psi(T)\Ocal_\psi\llbracket T\rrbracket.$$
\end{thm}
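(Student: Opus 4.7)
The plan is to follow the Ribet--Mazur--Wiles strategy, as adapted to totally real fields by Wiles in \cite{Wiles90}, obtaining the equality of ideals via two opposite divisibilities.

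Step 1 (Eisenstein congruence direction, $L_p(\psi, T) \mid C_\psi(T)$, i.e. $C_\psi(T)\Ocal_\psi\llbracket T\rrbracket \subseteq L_p(\psi, T)\Ocal_\psi\llbracket T\rrbracket$). I would first construct a $\Lambda$-adic Hilbert modular Eisenstein family $\mathcal{E}_\psi$ over $F$ attached to the pair $(\idbf, \psi)$, whose constant term is essentially $L_p(\psi, T)$ up to normalization. By Hida theory for Hilbert modular forms, the ordinary part of the $\Lambda$-adic space of cuspforms is finite and free over $\Lambda$, and one shows that $\mathcal{E}_\psi$ is congruent modulo $L_p(\psi, T)$ to a genuine cuspidal Hida family $f$ of matching tame level and nebentypus. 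The associated two-dimensional $\Lambda$-adic Galois representation $\rho_f : G_{F,\Sigma} \to \GL_2(\Ocal_\psi\llbracket T\rrbracket)$ has reduction modulo $L_p(\psi, T)$ with semisimplification $\idbf \oplus \psi\chi_\cyc^{-1}$, and a non-split extension class produces a nontrivial element of $\Hrm^1(G_{F,\Sigma}, \Ocal_\psi(\psi\chi_\cyc^{-1})/L_p(\psi, T))$. Via Kummer theory this yields an injection $\Ocal_\psi\llbracket T\rrbracket/L_p(\psi, T) \hookrightarrow X_\psi$, forcing $L_p(\psi, T) \mid C_\psi(T)$.

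Step 2 (analytic class number formula direction, $C_\psi(T) \mid L_p(\psi, T)$). At each layer $F_n$ of the cyclotomic tower $F^\cyc/F$ the analytic class number formula expresses the order of the $\psi$-isotypic component $\mathrm{Cl}(F_n)_p^\psi$ in terms of special values of $L^\Sigma(\psi, s)$. Passing to the limit along $n$ and invoking the interpolation formula of Theorem \ref{DR} identifies the $p$-adic leading term of $L_p(\psi, T)$ with the Euler characteristic invariants of $X_\psi$, yielding the reverse divisibility. Combining the two steps gives the desired equality of ideals.

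The main obstacle is Step 1: producing the Eisenstein--cuspidal congruence as a congruence of $\Lambda$-adic families modulo the full power series $L_p(\psi, T)$, not merely at individual classical specializations, and then controlling the reducibility of $\rho_f$ precisely enough to extract an unramified cocycle class of the correct type. This is the technical heart of \cite{Wiles90} and requires the theory of $\Lambda$-adic Hecke algebras for Hilbert modular forms, a careful analysis of the Eisenstein ideal in the ordinary Hecke algebra, and a global duality (Poitou--Tate / Greenberg--Wiles) argument to convert cohomological information into divisibility of characteristic ideals. Everything else is formal from Theorem \ref{DR}, the Iwasawa--Serre isomorphism, and general properties of characteristic ideals of torsion $\Lambda$-modules.
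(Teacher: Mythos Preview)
The paper does not give a proof of this statement: it is recorded as the Iwasawa Main Conjecture for totally real fields and attributed (implicitly, via the surrounding discussion and the citation \cite{Wiles90}) to Wiles. It is used as a black-box input to derive the Euler-characteristic formulae in the remainder of \S\ref{CTRsection}. So there is no ``paper's own proof'' to compare against.

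Your sketch is a fair high-level summary of the Mazur--Wiles/Wiles strategy, and Step~1 is essentially the correct outline of the Ribet-style divisibility via $\Lambda$-adic Hilbert Eisenstein series and Hida theory. Step~2, however, is not how the reverse divisibility is obtained in \cite{Wiles90}. Passing to the limit in the analytic class number formula over the layers $F_n$ does not produce a divisibility $C_\psi \mid L_p$ character-by-character; at best it yields an equality of \emph{total} Iwasawa invariants summed over all $\psi$, which only upgrades the divisibility of Step~1 to an equality once one already knows the $\mu$-invariants match (this is where the Ferrero--Washington/Sinnott-type input, and in Wiles's case a delicate inductive argument on the base field, enters). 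So your Step~2 as written is a genuine gap: the class-number-formula argument is a comparison of $\lambda$- and $\mu$-invariants that converts one divisibility into an equality, not an independent proof of the opposite divisibility.
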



\subsection{The group-cohomological Euler characteristic formula} Now, let us note that $G$ acts on $\Z_p(n)$ as long as $n\equiv 0\mod[F(\mu_p):F]$, so let $n$ be an integer -- positive or negative -- of that form. The determinant part in equation (\ref{DeterminantFormula}) admits a cohomological interpretation by the work of Bayer-Neukirch \autocite[Theorem 6.1]{Bayer1978} and Schneider \autocite[Theorem \S4.4]{Schneider1983}. Namely, the determinant does not vanish at $T=0$ if and only if the $\Gamma$-invariants (or equivalently, the $\Gamma$-coinvariants) of $X_\psi$ are finite dimensional. When one of these three equivalent conditions holds, one has
\begin{align}\label{AbelianEulerCharFla}|C_\psi(u^{n}-1)|_p=\frac{\sharp \Hrm^0\left(\Gamma,X_\psi(-n)\right)}{\sharp \Hrm^1\left(\Gamma,X_\psi(-n)\right)},\end{align}
where the $(-n)$-th Tate twist of $X_\psi$ appearing in the right hand side follows from \ref{characteristic power series twist description}.  

Now, combining \ref{DR}, \ref{AbelianIMCtotallyreal} and (\ref{AbelianEulerCharFla}), we obtain the following well-known result.

\begin{prop}\label{TRgroupcohomologyformula}
Let $n$ be an integer with $n\equiv 0\mod[F(\mu_p):F]$. Then, 
$$|L_{p}(\psi,\chi_\cyc^n)|_p=\frac{\sharp \Hrm^0\left(\Gamma,X_\psi(-n)\right)}{\sharp \Hrm^1\left(\Gamma,X_\psi(-n)\right)}.$$
\end{prop}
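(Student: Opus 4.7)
The plan is to combine the three ingredients the previous paragraphs have set up: the Deligne-Ribet interpolation property (Theorem \ref{DR}), the Iwasawa Main Conjecture for totally real fields (Theorem \ref{AbelianIMCtotallyreal}), and the Bayer-Neukirch/Schneider cohomological interpretation of the determinant (equation (\ref{AbelianEulerCharFla})). Each step is essentially an identity already in the excerpt; the work is to verify that the pieces compose cleanly and that the twist $(-n)$ on the right-hand side matches the evaluation $T=u^n-1$ on the left.

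First, by Theorem \ref{AbelianIMCtotallyreal}, the series $L_p(\psi,T)$ and $C_\psi(T)$ generate the same ideal in $\Ocal_\psi\llbracket T\rrbracket$, so they differ by a unit $U(T)\in\Ocal_\psi\llbracket T\rrbracket^\times$. Since units of $\Ocal_\psi\llbracket T\rrbracket$ evaluate at $T=u^n-1$ to units of $\Ocal_\psi$, the $p$-adic absolute values agree:
$$\left|L_p(\psi,u^n-1)\right|_p=\left|C_\psi(u^n-1)\right|_p.$$
The hypothesis $n\equiv 0\bmod [F(\mu_p):F]$ guarantees, via the remark after Theorem \ref{DR} (or directly via the pseudo-measure formulation), that the left-hand side agrees with $\left|L_p(\psi,\chi_\cyc^n)\right|_p$. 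Next, applying the twisting formula from Example \ref{characteristic power series twist description} to $X_\psi$ and evaluating at $T=0$ yields
$$\ch_{\Lambda}(X_\psi)(u^n-1)\equiv \ch_{\Lambda}(X_\psi(-n))(0)\bmod\Ocal_\psi^\times,$$
so the two quantities have equal $p$-adic norms. Finally, equation (\ref{AbelianEulerCharFla}) identifies $|C_\psi(u^n-1)|_p$ with the cohomological Euler characteristic quotient $\sharp\Hrm^0(\Gamma,X_\psi(-n))/\sharp\Hrm^1(\Gamma,X_\psi(-n))$, which chains together with the above to give the claimed equality.

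The only subtlety I would expect to have to justify carefully is the compatibility of the twist direction with the evaluation point, which is why I would spell out the $T\mapsto u^n(T+1)-1$ substitution explicitly rather than citing it implicitly; once this bookkeeping is checked the rest is a direct concatenation. There is no new content to establish — the proposition is really a corollary packaging the Main Conjecture and the Bayer-Neukirch/Schneider formula, and the proof will therefore consist of one or two display lines matching $|L_p(\psi,\chi_\cyc^n)|_p=|C_\psi(u^n-1)|_p=|\ch_\Lambda(X_\psi(-n))(0)|_p$ with the right-hand side of (\ref{AbelianEulerCharFla}).
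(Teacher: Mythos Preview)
Your proposal is correct and follows exactly the paper's approach: the paper states the proposition as an immediate consequence of combining Theorem~\ref{DR}, Theorem~\ref{AbelianIMCtotallyreal}, and equation~(\ref{AbelianEulerCharFla}), and you have simply unpacked this concatenation in slightly more detail (in particular making the twist bookkeeping via Example~\ref{characteristic power series twist description} explicit, which the paper already folds into its justification of~(\ref{AbelianEulerCharFla})).
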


\begin{rem}
In particular, note that when $n>0$ is an integer such that $n\equiv 0\mod[F(\mu_p):F]$, we have a group-cohomological interpretation of the $p$-adic valuation of the $L$-function, that is,
$$\left|L^\Sigma(\psi,1-n)\right|_p=\frac{\sharp \Hrm^0\left(\Gamma,X_\psi(-n)\right)}{\sharp \Hrm^1\left(\Gamma,X_\psi(-n)\right)}.$$
\end{rem}

\subsection{The \'etale-cohomological Euler characteristic formula} \label{etaleeulerchar} Let $n$ be an integer as in \ref{TRgroupcohomologyformula}. We can now use Shapiro's lemma to extend the formula above as follows
\begin{align}\label{eqn1}
    |L_p(\psi,\chi_\cyc^n)|_p=\frac{\# \Hrm^0\left(\Gamma, X_\psi(-n) \right)}{\# \Hrm^1\left(\Gamma,  X_\psi(-n) \right)}=\frac{\# \Hrm^0\left(G,  X(-n)\right)}{\# \Hrm^1\left(G,  X(-n)\right)}.
\end{align}
Let us now recall that we can identify $X$ with $\Hrm^1_\et(\Ocal_{F_\infty}[1/p];\Q_p/\Z_p)^\vee$. This is due to the fact that since $X$ is the Galois group of the maximal abelian $p$-extension of $F^\cyc$ that is unramified outside $p$, it can be identified with the maximal abelian pro-$p$ factor group of the algebraic fundamental group of $\Spec(\Ocal_{F_\infty}[1/p])$. Taking this into account, 
\begin{align*} \Hrm^\nu(G,X(-n))&=\Hrm^\nu\left(G,\Hrm^1_\et(\Ocal_{F_\infty}[1/p];\Q_p/\Z_p(n))^\vee\right)\\
&=\Hrm^{1-\nu}\left(G,\Hrm^1_\et(\Ocal_{F_\infty}[1/p];\Q_p/\Z_p(n))\right)^\vee,\qquad \nu=0,1.\end{align*}
where the first equality follows from the fact that the Pontryagin dual inverts the action of the cyclotomic character, and the second equality follows from the proof of \autocite[Lemma \S I.2.4]{Schneider1983}. Combining these equalities with \ref{eqn1}, we obtain
$$|L_p(\psi,\chi_\cyc^n)|_p=\frac{\# \Hrm^1\left(G, \Hrm^1_\et\left(\Ocal_{F_\infty}[1/p],\Q_p/\Z_p(n)\right)\right)}{\# \Hrm^0\left(G, \Hrm^1_\et\left(\Ocal_{F_\infty}[1/p],\Q_p/\Z_p(n)\right)\right)}.$$
Following \autocite[Lemma \S II.4.1]{Schneider1983} we can use the Hochschild-Serre spectral sequence 
$$E^{i,j}_2=\Hrm^i\left(G,\Hrm^j_\et\left(\Ocal_{F_\infty}[1/p],\Q_p/\Z_p(n)\right)\right)\Rightarrow \Hrm^{i+j}_\et\left(\Ocal_F[1/p],\Q_p/\Z_p(n)\right).$$
Schneider studies the groups appearing in this spectral sequence and observes that it degenerates in the second page, and from there he deduces that, since the weak Leopoldt conjecture is proven in the case of totally real number fields, and that is equivalent to $\Hrm^2_\et(\Ocal_{F_\psi}[1/p],\Q_p/\Z_p)=0$, one has the following classical result.

\begin{thm}\label{etale_formula}
	{\rm (\autocite[Theorem 6.1]{Bayer1978},\autocite[Theorem \S4.4]{Schneider1983})}\label{BayerNeukirch} Let $n$ be an integer with $n\equiv 0\mod [F(\mu_p):F]$. Then, the following equality holds
	$$\left|L_p(\psi,\chi_\cyc^n)\right|_p=\frac{\#\Hrm^0_\et\left(\Ocal_F[1/p],\Q_p/\Z_p(n)\right)}{\#\Hrm^1_\et\left(\Ocal_F[1/p],\Q_p/\Z_p(n)\right)}.$$
\end{thm}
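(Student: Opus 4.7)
The plan is to upgrade the group-cohomological formula of Proposition \ref{TRgroupcohomologyformula} into an étale-cohomological one through three successive identifications: a passage from $\Gamma$-cohomology of $X_\psi(-n)$ to $G$-cohomology of $X(-n)$, an identification of $X$ as the Pontryagin dual of an étale cohomology group on $\Spec(\Ocal_{F_\infty}[1/p])$, and a descent via the Hochschild--Serre spectral sequence.

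First I would apply Shapiro's lemma to obtain
\[
\frac{\sharp \Hrm^0(\Gamma, X_\psi(-n))}{\sharp \Hrm^1(\Gamma, X_\psi(-n))} = \frac{\sharp \Hrm^0(G, X(-n))}{\sharp \Hrm^1(G, X(-n))},
\]
exploiting the splitting $G = H \times \Gamma$ together with the fact that the $\psi$-isotypic decomposition of $X \otimes \Ocal_\psi$ is preserved by $\Gamma$-cohomology, since the order of $\psi$ (and hence the relevant piece of $H$) is coprime to $p$. Next I would invoke class field theory to identify $X = X(F_\infty)$ with the Pontryagin dual of $\Hrm^1_\et(\Ocal_{F_\infty}[1/p], \Q_p/\Z_p)$, via the usual presentation of $X$ as the maximal abelian pro-$p$ quotient of the étale fundamental group of $\Spec(\Ocal_{F_\infty}[1/p])$. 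Using that Pontryagin duality inverts the cyclotomic action and exchanges cohomology with the dual of homology, this step yields
\[
\Hrm^\nu(G, X(-n)) \simeq \Hrm^{1-\nu}\bigl(G, \Hrm^1_\et(\Ocal_{F_\infty}[1/p], \Q_p/\Z_p(n))\bigr)^\vee, \qquad \nu = 0, 1,
\]
which preserves cardinalities while flipping the indices, so that the ratio becomes
\[
|L_p(\psi, \chi_\cyc^n)|_p = \frac{\sharp \Hrm^1(G, \Hrm^1_\et(\Ocal_{F_\infty}[1/p], \Q_p/\Z_p(n)))}{\sharp \Hrm^0(G, \Hrm^1_\et(\Ocal_{F_\infty}[1/p], \Q_p/\Z_p(n)))}.
\]

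Finally I would bring in the Hochschild--Serre spectral sequence
\[
E_2^{i,j} = \Hrm^i(G, \Hrm^j_\et(\Ocal_{F_\infty}[1/p], \Q_p/\Z_p(n))) \;\Rightarrow\; \Hrm^{i+j}_\et(\Ocal_F[1/p], \Q_p/\Z_p(n))
\]
and show that it degenerates at $E_2$ in the relevant range. The crucial input here is the weak Leopoldt conjecture for totally real fields, which gives $\Hrm^2_\et(\Ocal_{F_\infty}[1/p], \Q_p/\Z_p) = 0$ and kills every row with $j \geq 2$; together with the fact that $G$ has cohomological dimension one, which kills $i \geq 2$, only the four entries with $0 \leq i, j \leq 1$ survive. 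Reading off the abutment and tracking the resulting short exact sequences -- following \autocite[Lemma \S II.4.1]{Schneider1983} -- then identifies the ratio above with $\sharp \Hrm^0_\et(\Ocal_F[1/p], \Q_p/\Z_p(n)) / \sharp \Hrm^1_\et(\Ocal_F[1/p], \Q_p/\Z_p(n))$, completing the proof.

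The step I expect to carry the most weight is this last one: verifying that the contributions of the $j = 0$ row on the $E_2$-page combine with the $j = 1$ row in such a way that the \emph{cardinality ratios} -- not just the abstract cohomology groups -- line up exactly. Once weak Leopoldt is invoked this amounts to a careful bookkeeping of the two short exact sequences coming from the $E_\infty$-filtration on $\Hrm^1_\et$ and $\Hrm^2_\et$ of $\Ocal_F[1/p]$, but the accounting has to be done with some care, since terms such as $\Hrm^i(G, \Q_p/\Z_p(n)^{G_{F_\infty}})$ contribute to both numerator and denominator and must cancel.
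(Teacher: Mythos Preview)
Your proposal is correct and follows essentially the same route as the paper: Shapiro's lemma to pass from $\Gamma$-cohomology of $X_\psi(-n)$ to $G$-cohomology of $X(-n)$, the class-field-theoretic identification of $X$ with $\Hrm^1_\et(\Ocal_{F_\infty}[1/p],\Q_p/\Z_p)^\vee$, the duality flip of indices, and then the Hochschild--Serre spectral sequence together with weak Leopoldt (via \autocite[Lemma \S II.4.1]{Schneider1983}) to descend to $\Ocal_F[1/p]$. Your closing paragraph on the $j=0$ bookkeeping is more explicit than the paper, which simply defers this accounting to Schneider's analysis of the $E_2$-page, but the substance is the same.
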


\subsection{($K(1)$-local) $K$-theory and \'etale descent} \label{Ktheoandetalesection} Following the same notation as in the previous sections, as well as the one introduced in \S\ref{htpy theory bg section}, we can state the following theorem. 
\begin{thm}
    {\rm (Blumberg-Mandell, \autocite[Theorem 1 \& Corollary 1]{blumberg2015fiber})}\label{BMTheo} The homotopy fiber $\fib(\kappa)$ of the completion map 
    $$\kappa: L_{K(1)}K(\Ocal_F[1/p])\to \prod_{v\in \Sigma}L_{K(1)}K(F_v)$$
    in $K(1)$-local algebraic $K$-theory is weakly equivalent to $\Omega I_{\Z_p}L_{K(1)}K(\Ocal_F[1/p])$, where $I_{\Z_p}$ denotes the Anderson duality functor for $p$-local spectra. Furthermore, there is a canonical map of spectra from the homotopy fiber of the $p$-adic cyclotomic trace map $$\tr_p^\wedge:K(\Ocal_F;\Z_p)\to\TC(\Ocal_F;\Z_p)$$
    to $\Omega I_{\Z_p}L_{K(1)}K(\Ocal_F)$ that induces an isomorphism of homotopy groups on degrees $\geq 2$. 
\end{thm}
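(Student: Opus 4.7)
The plan is to prove the two statements separately: the first via étale descent for $L_{K(1)}K$ combined with a spectrum-level arithmetic duality, and the second by comparing the cyclotomic trace with the natural map into $K(1)$-local $K$-theory.

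For the equivalence $\fib(\kappa)\simeq\Omega I_{\Z_p}L_{K(1)}K(\Ocal_F[1/p])$, I would first invoke Thomason's étale hyperdescent theorem for $K(1)$-local $K$-theory. This identifies $L_{K(1)}K(\Ocal_F[1/p])$ with the continuous étale cohomology spectrum assembled from $\Hrm^*_\et(\Ocal_F[1/p];\Z_p(*))$ through the descent spectral sequence, and similarly each $L_{K(1)}K(F_v)$ with the Galois cohomology spectrum of $F_v$. Under this identification, the completion map $\kappa$ becomes the restriction from global to semi-local cohomology, and its fiber is precisely étale cohomology with compact support. The crucial next input is a spectrum-level Artin--Verdier / Tate--Poitou duality, which I would promote from the classical statement for individual cohomology groups to an equivalence
$$\fib(\kappa)\simeq\Omega I_{\Z_p}L_{K(1)}K(\Ocal_F[1/p]),$$
using that in the $K(1)$-local category the Anderson dualizing object is invertible and that Bott periodicity absorbs the Tate twist $\Z_p(1)$ appearing in the dualizing complex on the arithmetic curve (the shift $[2]$ in the dualizing sheaf translates into the loop $\Omega$).

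For the second statement, I would use the Dundas--Goodwillie--McCarthy theorem together with the Hesselholt--Madsen description of $\TC(\Ocal_F;\Z_p)$ in high degrees. Concretely, the cyclotomic trace $\tr_p^\wedge$ factors, in a range, through the $K(1)$-localization map $K(\Ocal_F;\Z_p)\to L_{K(1)}K(\Ocal_F[1/p])$, so the fiber of $\tr_p^\wedge$ receives a natural map from $\fib(\kappa)$; composing with the equivalence from the first part yields the required map to $\Omega I_{\Z_p}L_{K(1)}K(\Ocal_F)$. The claim that this map is an isomorphism on $\pi_{\geq 2}$ then reduces to the Quillen--Lichtenbaum conjecture, now known as the Voevodsky--Rost theorem, which ensures $K(\Ocal_F;\Z_p)\to L_{K(1)}K(\Ocal_F[1/p])$ is an equivalence in a sufficiently high range of degrees; a short diagram chase on long exact sequences propagates this into the degree shift that arises when passing to fibers.

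The main technical obstacle is the first step, namely lifting Artin--Verdier duality from a statement about cohomology groups to a coherent equivalence of $K(1)$-local spectra. Classically Tate--Poitou produces dualities of individual Galois cohomology groups after the appropriate twist, but here one needs full compatibility with the Galois/Adams action and with every differential in the descent spectral sequence, so that the duality assembles into a map of spectra rather than merely of homotopy groups. This requires using that $L_{K(1)}K$ is a commutative algebra in the $K(1)$-local category, that the Anderson dualizing spectrum $I_{\Z_p}$ is representable there by a specific invertible object, and crucially the Rost--Voevodsky norm residue theorem so that the étale descent spectral sequence degenerates in a controlled way. Once this spectrum-level duality is in hand, everything else reduces to bookkeeping with long exact sequences and the classical Quillen--Lichtenbaum package.
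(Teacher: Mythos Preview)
The paper does not prove this statement at all: it is quoted as a black box from Blumberg--Mandell \cite{blumberg2015fiber} (Theorem~1 and Corollary~1 there) and then applied in the subsequent corollaries. There is therefore no ``paper's own proof'' for your attempt to be compared against.

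That said, your outline is broadly in the spirit of what Blumberg--Mandell actually do in the cited reference: the core of their argument is precisely a spectrum-level lift of Tate--Poitou duality in the $K(1)$-local category, built on Thomason descent, and you have correctly identified this as the main technical obstacle. One small confusion in your sketch of the second assertion: you say ``the fiber of $\tr_p^\wedge$ receives a natural map from $\fib(\kappa)$,'' but the theorem asserts a map in the opposite direction, from $\fib(\tr_p^\wedge)$ to $\Omega I_{\Z_p}L_{K(1)}K(\Ocal_F)\simeq\fib(\kappa)$. The construction proceeds by comparing the square $K(\Ocal_F;\Z_p)\to\TC(\Ocal_F;\Z_p)$ with its $K(1)$-localization and the completion map, yielding $\fib(\tr_p^\wedge)\to\fib(\kappa)$ on fibers; the isomorphism on $\pi_{\geq 2}$ then follows, as you indicate, from Quillen--Lichtenbaum together with the Hesselholt--Madsen comparison of $\TC$ with local $K$-theory in high degrees.
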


\begin{rem}
Note that, thanks to \autocite[Theorem 1.1]{bhatt2020remarks}, we know that $L_{K(1)}K(\Ocal_F[1/p])\simeq L_{K(1)}K(\Ocal_F)$, which allows us to identify both fibers in the specified degrees. 
\end{rem}

We would like to relate the \'etale cohomology groups showing up in Theorem \ref{BayerNeukirch} and ($K(1)$-local) $K$-theory. To do so, we use the Thomason descent spectral sequence which, given a ring $R$ that we assume to be either $\Ocal_F[1/p]$ or $F_v$ for any $v\in \Sigma$, has the following form
$$E^{p,q}_2=\Hrm^p_\et(R,\Z_p(-q/2))\Rightarrow\pi_{-p-q}L_{K(1)}K(R),$$
where the groups on the left are assumed to be zero whenever $q$ is not divisible by $2$. 

Using Thomason's descent spectral sequence as in \cite{blumberg2015fiber} or \cite{dwyer1998k}, if we denote the Moore spectrum associated to $\Q_p/\Z_p$ by $M(\Q_p/\Z_p)$, we have the following table.  
\begin{center}
\begin{tabular}{| c |c| c |}
\hline
 $X$ & $\pi_{2k+1}X$ & $\pi_{2k}X$ \\ 
 \hline
 \multirow{3}{*}{$L_{K(1)}K(R)$} &  &  $\Hrm^0_\et(R,\Z_p(k))$ \\
    & $\Hrm^1_\et(R,\Z_p(k+1))$ & $\oplus$ \\ & & $\Hrm^2_\et(R,\Z_p(k+1))$ \\
 \hline
  \multirow{3}{*}{$L_{K(1)}\left(K(R)\otimes M_{\Q_p/\Z_p}\right)$} &  &  $\Hrm^0_\et(R,\Q_p/\Z_p(k))$ \\
    & $\Hrm^1_\et(R,\Q_p/\Z_p(k+1))$ & $\oplus$ \\ & & $\Hrm^2_\et(R,\Q_p/\Z_p(k+1))$ \\
    \hline
\end{tabular}
\end{center}

\begin{rem} As explained in Blumberg-Mandell's article, one can use \ref{BMTheo} to identify (non-canonically) the {\it Poitou-Tate exact sequence} 

\begin{center}
\adjustbox{scale=0.97,center}{
\begin{tikzcd} 
0\to \Hrm_\et^0(\Ocal_F[1/p],\Z_p(k)) \arrow[r,"\Pi_0"]
& \prod_{v\in \Sigma}\Hrm^0_\et(F_v,\Z_p(k))\arrow[r]\arrow[d, phantom, ""{coordinate, name=W}]
& \Hrm^2_\et(\Ocal_F[1/p],\Q_p/\Z_p(1-k))^\vee 
\arrow[dll,rounded corners=8pt,curvarr=W] 
\\
\Hrm_\et^1(\Ocal_F[1/p],\Z_p(k)) \arrow[r,"\Pi_1"]
& \prod_{v\in \Sigma}\Hrm^1_\et(F_v,\Z_p(k))\arrow[r] \arrow[d, phantom, ""{coordinate, name=W}]
& \Hrm^1_\et(\Ocal_F[1/p],\Q_p/\Z_p(1-k))^\vee
\arrow[dll,rounded corners=8pt,curvarr=W] 
\\
\Hrm_\et^2(\Ocal_F[1/p],\Z_p(k)) \arrow[r,"\Pi_2"]
& \prod_{v\in \Sigma}\Hrm^2_\et(F_v,\Z_p(k))\arrow[r] 
& \Hrm^0_\et(\Ocal_F[1/p],\Q_p/\Z_p(1-k))^\vee\to 0,
\end{tikzcd}}\end{center}
with the long exact sequence induced by the taking homotopy groups of the fibration in $K(1)$-local $K$-theory
\begin{center}
\begin{tikzcd}
\cdots\to \pi_l\fib(\kappa) \arrow[r]
& \pi_l L_{K(1)}K(\Ocal_F[1/p])\arrow[r]\arrow[d, phantom, ""{coordinate, name=W}]
& \prod_{v\in \Sigma}\pi_l L_{K(1)}K(F_v)
\arrow[dll,rounded corners=8pt,curvarr=W] 
\\
\pi_{l-1}\fib(\kappa) \arrow[r]
&  \pi_{l-1}L_{K(1)}K(\Ocal_F[1/p])\arrow[r] 
& \prod_{v\in \Sigma}L_{K(1)}K(F_v)\to\cdots. 
\end{tikzcd}
\end{center}
This is due to Anderson duality, which provides an isomorphism
\begin{align}\label{AndersonDuality}
    \left(\pi_{-1-l}\left(L_{K(1)}K(\Ocal_F[1/p])\otimes M_{\Q_p/\Z_p}\right)\right)^\vee\simeq \pi_{l+1}( I_{\Z_p}L_{K(1)}K(\Ocal_F[1/p])).\end{align}
    The second term is isomorphic to $\pi_l\fib(\kappa)$ by Theorem \ref{BMTheo}, and the first term can be compared with the Pontryagin dual of the corresponding \'etale cohomology group in the table above, thus giving the identification between both exact sequences. 
\end{rem}

\subsection{The $K(1)$-local $K_\ast$-theoretic description of the special values} We can move on to applying all the machinery we have introduced to give another formula for the special values of the $p$-adic $L$-function, and the results we provide here are an extension, as well as an alternative proof of a result by Hesselholt, who studied the case $F=\Q$ in \cite{Hess18}. 

\begin{cor}\label{hessgeneralization}
Let $n$ be such that $n\equiv 0\mod[F(\mu_p):F]$. Then, 
	$$\left|L_p(\psi,\chi_\cyc^n)\right|_p=\frac{\#\pi_{-1-2n}\fib(\kappa)}{\#\pi_{-2n}\fib(\kappa)}.$$
	Furthermore, if $n \leq -1$,
	$$\left|L_p(\psi,\chi_\cyc^n)\right|_p=\frac{\#\pi_{-1-2n}\fib\left(\tr_p^\wedge\right)}{\#\pi_{-2n}\fib\left(\tr_p^\wedge\right)}$$
\end{cor}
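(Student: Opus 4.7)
The plan is to reduce the statement to the étale-cohomological Euler characteristic formula of Theorem \ref{etale_formula} by translating the homotopy groups of $\fib(\kappa)$ into étale cohomology groups. The three ingredients are the Anderson duality isomorphism (\ref{AndersonDuality}), the Blumberg-Mandell weak equivalence $\fib(\kappa)\simeq \Omega I_{\Z_p}L_{K(1)}K(\Ocal_F[1/p])$ from Theorem \ref{BMTheo}, and the Thomason descent spectral sequence summarized in the $\Q_p/\Z_p$-coefficients row of the table at the end of \S\ref{Ktheoandetalesection}.

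Combining (\ref{AndersonDuality}) with Theorem \ref{BMTheo} gives, for every integer $l$, an identification
$$\pi_l \fib(\kappa) \simeq \bigl(\pi_{-1-l}\bigl(L_{K(1)}K(\Ocal_F[1/p]) \otimes M_{\Q_p/\Z_p}\bigr)\bigr)^{\vee}.$$
Specializing to $l=-1-2n$ and $l=-2n$ and reading off the appropriate rows of the table, Pontryagin duality (which preserves cardinality on finite abelian groups) yields
\begin{align*}
\sharp\pi_{-1-2n}\fib(\kappa) &= \sharp\Hrm^0_{\et}(\Ocal_F[1/p], \Q_p/\Z_p(n)) \cdot \sharp\Hrm^2_{\et}(\Ocal_F[1/p], \Q_p/\Z_p(n+1)),\\
\sharp\pi_{-2n}\fib(\kappa) &= \sharp\Hrm^1_{\et}(\Ocal_F[1/p], \Q_p/\Z_p(n)).
\end{align*}
Provided that $\Hrm^2_{\et}(\Ocal_F[1/p], \Q_p/\Z_p(n+1))$ vanishes, the resulting ratio of cardinalities agrees with the right-hand side of Theorem \ref{etale_formula} and the first formula follows. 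In the totally real setting this vanishing should be available through weak Leopoldt combined with the Hochschild-Serre descent along the cyclotomic $\Z_p$-extension used in \S\ref{etaleeulerchar}.

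For the second formula, the canonical map $\fib(\tr_p^\wedge)\to\Omega I_{\Z_p}L_{K(1)}K(\Ocal_F)\simeq \fib(\kappa)$ from Theorem \ref{BMTheo} induces isomorphisms on $\pi_k$ for $k\geq 2$, and we may freely replace $\Ocal_F$ by $\Ocal_F[1/p]$ by the subsequent remark invoking \cite{bhatt2020remarks}. The hypothesis $n\leq -1$ makes $-2n\geq 2$, so the $\pi_{-2n}$ identification is immediate; the $\pi_{-1-2n}$ identification requires $-1-2n\geq 2$, that is, $n\leq -2$, and the boundary case $n=-1$ will need a direct inspection of the comparison map in the lowest relevant degree. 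Once both identifications are in place, the first formula transports to $\fib(\tr_p^\wedge)$.

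The step I expect to be the main obstacle is the $\Hrm^2$-vanishing: although Schneider's proof of Theorem \ref{BayerNeukirch} covers the untwisted case, the argument here requires a uniform twisted version with coefficients $\Q_p/\Z_p(n+1)$ for every $n\equiv 0 \pmod{[F(\mu_p):F]}$, and the boundary regime of the trace comparison at $n=-1$ is a second point that deserves careful verification.
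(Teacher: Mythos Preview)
Your approach matches the paper's: translate $\pi_*\fib(\kappa)$ into \'etale cohomology via Anderson duality (\ref{AndersonDuality}), Theorem \ref{BMTheo}, and the Thomason table, then invoke Theorem \ref{etale_formula}. You are in fact more precise than the paper's written proof on one point: you correctly locate the need for the $\Hrm^2$-vanishing at the $\pi_{-1-2n}$ step (to kill the summand $\Hrm^2_{\et}(\Ocal_F[1/p],\Q_p/\Z_p(n+1))$ in $\pi_{2n}$), whereas the paper invokes weak Leopoldt at the $\pi_{-2n}$ step, where the table already gives $\pi_{2n-1}=\Hrm^1_{\et}(\Ocal_F[1/p],\Q_p/\Z_p(n))$ with no extra summand to remove. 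The twisted vanishing you flag does follow from the untwisted one exactly as you suggest: weak Leopoldt at the cyclotomic-tower level together with Hochschild--Serre descent (as in Schneider's argument in \S\ref{etaleeulerchar}) yields $\Hrm^2_{\et}(\Ocal_F[1/p],\Q_p/\Z_p(m))=0$ for all relevant $m$, so this is not a genuine obstacle. Your observation about the boundary case $n=-1$, where $-1-2n=1$ lies outside the range $\geq 2$ guaranteed by Theorem \ref{BMTheo}, is also legitimate; the paper's proof simply cites Theorem \ref{BMTheo} without isolating this degree, so you are not missing anything available in the text.
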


\begin{proof}
    Let us start by observing that 
    \begin{align*}
        \Hrm^0_\et(\Ocal_{F}[1/p],\Q_p/\Z_p(n))^\vee&\simeq\pi_{2n}L_{K(1)}\left(K(\Ocal_{F}[1/p])\otimes M_{\Q_p/\Z_p}\right)^\vee \\
        &\simeq \pi_{-1-2n}\Omega I_{\Z_p}L_{K(1)}K(\Ocal_{F}[1/p])\\
        &\simeq \pi_{-1-2n}\fib(\kappa),
    \end{align*}
    where the middle equality follows from (\ref{AndersonDuality}) above. 
    
    On the other hand, and as mentioned in the previous section, the weak Leopoldt conjecture is true in the case of totally real and imaginary quadratic fields, and that is equivalent to the vanishing of $\Hrm^2_\et(\Ocal_F[1/p],\Q_p/\Z_p)$. Thanks to this, if we use the table above again, we obtain that
	\begin{align*} \Hrm^1_\et(\Ocal_F[1/p],\Q_p/\Z_p(n))^\vee &\simeq \pi_{2n-1}L_{K(1)}\left(K(\Ocal_F[1/p])\otimes M_{\Q_p/\Z_p}\right)^\vee\\
&\simeq \pi_{-2n}\fib(\kappa),\end{align*}
	where the last equivalence is obtained by mimicking the argument above. The degrees on which the homotopy groups of $\fib(\kappa)$ and $\fib(\tr_p^\wedge)$ come from Theorem \ref{BMTheo} above.
\end{proof}

We can also use the $K(1)$-local $K$-theoretic Poitou-Tate duality of Blumberg-Mandell to give a description of the $p$-adic valuation of the special values without using the fiber.

\begin{cor}\label{corollaryk1}
	Let $n$ be an integer with $n\equiv 0\mod[F(\mu_p):F]$. Then, 
	$$
	|L_p(\psi,\chi_\cyc^n)|_p=\frac{\#\pi_{-2n+1}L_{K(1)}K(\Ocal_F[1/p])}{\#\pi_{-2n}L_{K(1)}K(\Ocal_F[1/p])}\cdot\prod_{v\in\Sigma}\frac{\#\pi_{-2n}L_{K(1)}K(F_v)}{\#\pi_{-2n+1}L_{K(1)}K(F_v)}
	$$
\end{cor}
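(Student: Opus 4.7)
The plan is to use Corollary \ref{hessgeneralization} as a starting point and to eliminate the fiber $\fib(\kappa)$ from the formula by means of the $K(1)$-local Poitou-Tate duality of Blumberg-Mandell. Writing $A_\ell = \pi_\ell\fib(\kappa)$, $B_\ell = \pi_\ell L_{K(1)}K(\Ocal_F[1/p])$ and $C_\ell = \pi_\ell\prod_{v\in\Sigma}L_{K(1)}K(F_v)$, Corollary \ref{hessgeneralization} already gives $|L_p(\psi,\chi_\cyc^n)|_p = \#A_{-1-2n}/\#A_{-2n}$, so the task reduces to rewriting this ratio in terms of the $B_\bullet$ and $C_\bullet$ alone.

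The crucial input is the identification described in the remark of \S\ref{Ktheoandetalesection}: under the Thomason descent spectral sequence and the Anderson duality $\fib(\kappa)\simeq\Omega I_{\Z_p}L_{K(1)}K(\Ocal_F[1/p])$, the long exact sequence associated to the Blumberg-Mandell fiber sequence matches the classical $9$-term Poitou-Tate exact sequence for the Galois module $\Z_p(1-n)$. Since $F$ is totally real, the weak Leopoldt vanishing $\Hrm^2_\et(\Ocal_F[1/p],\Q_p/\Z_p(n))=0$ already invoked in the proof of Theorem \ref{etale_formula} makes this Poitou-Tate sequence an honest finite exact sequence of finite abelian groups, whence the alternating product of the orders of its nine terms equals $1$.

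Next, I would regroup the nine factors using the Thomason-table decompositions $\pi_{2k}L_{K(1)}K(R)\simeq \Hrm^0_\et(R,\Z_p(k))\oplus \Hrm^2_\et(R,\Z_p(k+1))$ and $\pi_{2k+1}L_{K(1)}K(R)\simeq \Hrm^1_\et(R,\Z_p(k+1))$. Via Anderson duality, the two Pontryagin duals $\Hrm^0_\et(\Ocal_F[1/p],\Q_p/\Z_p(n))^\vee$ and $\Hrm^1_\et(\Ocal_F[1/p],\Q_p/\Z_p(n))^\vee$ present in the Poitou-Tate sequence account for exactly $\#A_{-1-2n}/\#A_{-2n}$, and hence for $|L_p(\psi,\chi_\cyc^n)|_p$ by Corollary \ref{hessgeneralization}. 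The remaining six factors assemble into the $K$-theoretic ratio on the right-hand side of the claimed identity, provided the local and global $\Hrm^0_\et(\cdot,\Z_p(-n))$ contributions that occur as summands of $\#B_{-2n}$ and $\#C_{-2n}$ cancel.

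The main obstacle is exactly this last cancellation: the $\Hrm^0_\et$-summand of $\pi_{-2n}L_{K(1)}K$ carries weight $-n$, while the Poitou-Tate sequence used above is for weight $1-n$. To close the argument one has to invoke a second three-term Poitou-Tate segment, now for $\Z_p(-n)$, together with a parallel weak Leopoldt statement, in order to conclude that $\Hrm^0_\et(\Ocal_F[1/p],\Z_p(-n))\simeq\prod_v \Hrm^0_\et(F_v,\Z_p(-n))$. This removes the spurious $\Hrm^0$-factors from the ratio $\#C_{-2n}/\#B_{-2n}$ and allows the six remaining Poitou-Tate factors to recombine exactly into the announced $K$-theoretic expression, completing the proof.
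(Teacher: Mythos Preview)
Your approach is essentially the paper's: both arguments feed the Blumberg--Mandell fiber sequence, the weak Leopoldt vanishing, and the alternating--product identity for a finite exact sequence into Theorem~\ref{etale_formula}/Corollary~\ref{hessgeneralization}. The paper, however, never passes through the nine individual \'etale terms. It works directly with the long exact sequence of homotopy groups of $\fib(\kappa)\to L_{K(1)}K(\Ocal_F[1/p])\to\prod_v L_{K(1)}K(F_v)$ and uses the Blumberg--Mandell identification to see that (i) the boundary map out of $\Hrm^0_\et(\Ocal_F[1/p],\Q_p/\Z_p(1-k))^\vee$ is already zero (this is the $\to 0$ at the end of the Poitou--Tate nine--term sequence), and (ii) weak Leopoldt kills the term $\Hrm^2_\et(\Ocal_F[1/p],\Q_p/\Z_p(1-k))^\vee$. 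This cuts out a six--term exact sequence whose entries are \emph{already} the full groups $\pi_\bullet L_{K(1)}K(-)$, so the formula drops out immediately.

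By contrast, because you unpack into \'etale cohomology first and then reassemble, you create the ``obstacle'' of matching the $\Hrm^0_\et(\cdot,\Z_p(-n))$ summand of $\pi_{-2n}$. Your proposed fix (a second Poitou--Tate segment plus a second weak--Leopoldt input) works, but it is heavier than necessary: for any $n\neq 0$ one has $\Hrm^0_\et(R,\Z_p(-n))=0$ simply because the cyclotomic character has infinite image, so there is nothing to cancel. Either adopt this elementary vanishing, or---cleaner still---imitate the paper and stay in the homotopy long exact sequence from the start; then the issue never arises.
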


\begin{proof}
We can use the table above together with the fact that the \'etale cohomology group $\Hrm^2_\et(\Ocal_F[1/p],\Q_p/\Z_p)^\vee$ must vanish by the weak Leopoldt conjecture, to conclude that the classical long exact sequence on homotopy groups can be split as follows \begin{center}
\adjustbox{center}{
\begin{tikzcd}
0\to \pi_{1-2k}L_{K(1)}K(\Ocal_F[1/p])\arrow[r,"\Pi_1"]
& \prod_{v\in \Sigma}\pi_{1-2k}L_{K(1)}K(F_v)\arrow[r] \arrow[d, phantom, ""{coordinate, name=W}]
& \Hrm^1_\et(\Ocal_F[1/p],\Q_p/\Z_p(1-k))^\vee
\arrow[dll,rounded corners=8pt,curvarr=W] 
\\
\pi_{2-2k}L_{K(1)}K(\Ocal_F[1/p]) \arrow[r,"\Pi_2"]
& \prod_{v\in \Sigma}\pi_{2-2k}L_{K(1)}K(F_v)\arrow[r] 
& \Hrm^0_\et(\Ocal_F[1/p],\Q_p/\Z_p(1-k))^\vee\to 0,
\end{tikzcd}}
\end{center}

Since all terms in the exact sequence are finite, we obtain the result.	
\end{proof}

\subsection{A note on Hesselholt's expository note} According to Hesselholt, for $k>1$,
$$\left|L_p\left(\Q,\psi,\chi_\cyc^{-2k}\right)\right|_p=\frac{\#\pi_{4k-1}\fib\left(\tr_p^\wedge:K(\Z;\Z_p)\to\TC(\Z;\Z_p)\right)}{\#\pi_{4k}\fib\left(\tr_p^\wedge:K(\Z;\Z_p)\to\TC(\Z;\Z_p)\right)}$$
This formula is recovered for those values of $k$ in our result \ref{hessgeneralization}. The problem is that we would like to give the formula above in terms of the cyclotomic fiber for {\it all} $k$, instead of focusing on $k>1$. To do so, observe that
$$\left|L_p\left(F,\psi,\chi_\cyc^{n}\right)\right|_p=\frac{\# \Hrm^1_\et\left(\Ocal_F[1/p],\Q_p/\Z_p(n)\right)}{\#\Hrm^0_\et\left(\Ocal_F[1/p],\Q_p/\Z_p(n)\right)}=\frac{\# \Hrm^1_\et\left(\Ocal_F[1/p],\Q_p/\Z_p(n)\right)^\vee}{\#\Hrm^0_\et\left(\Ocal_F[1/p],\Q_p/\Z_p(n)\right)^\vee},$$
and let us also remember that, for all $n$,
$$\Hrm^1_\et\left(\Ocal_F[1/p],\Q_p/\Z_p(n)\right)^\vee\simeq\pi_{2n}\fib\left(L_{K(1)}\kappa:L_{K(1)}K(\Ocal_F[1/p])\to\prod_{v\in\Sigma}L_{K(1)}K(F_v)\right)$$
$$\Hrm^0_\et\left(\Ocal_F[1/p];\Q_p/\Z_p(n)\right)^\vee\simeq\pi_{1-2n}\fib\left(L_{K(1)}\kappa:L_{K(1)}K(\Ocal_F[1/p])\to\prod_{v\in\Sigma}L_{K(1)}K(F_v)\right)$$
On the other hand, the following diagram can be used to identify the fiber of $\kappa$ with the connective cover of the fiber of $\tr_p^\wedge$, where the equivalences are argued in \cite{Hess2018}:
\begin{align*}
\xymatrix{
\TC(\Z;\Z_p)\ar[r]^{\simeq } & \TC(\Z_p;\Z_p) \\
K(\Z;\Z_p) \ar[r]_{\kappa}\ar[u]^{\tr_p^\wedge} & K(\Z_p;\Z_p),\ar[u]_{\simeq \text{ in degrees }\geq0} 
}
\end{align*}
However, using Quillen's localization sequence will only give us information in degrees greater or equal than $2$, and that is why we will only have the formula for positive values of $k$. 

Instead of following that path, we realize we also have the following diagram, obtained after $K(1)$-localization, and substituting $\Z$ and $\Z_p$ by their respective generalizations to $F$, namely
\begin{align*}
\xymatrix@C=4em{
& L_{K(1)}\TC(\Ocal_F)\ar[r]^-{\simeq\text{ by \cite{Hess2018}}} & \prod_{v\in\Sigma}L_{K(1)}\TC(\Ocal_v) & \prod_{v\in\Sigma}L_{K(1)}K(\Ocal_v) \ar[l]_{\simeq}^{(\ast)}\\
\fib\left(L_{K(1)}\kappa\right)\ar@{-->}[r] & L_{K(1)}K(\Ocal_F) \ar[r]_-{L_{K(1)}\kappa}\ar[u]^{L_{K(1)}\tr} \ar[d]_{\simeq}^{\text{ by \autocite[Thm. 1.1]{bhatt2020remarks}}} & \prod_{v\in\Sigma}L_{K(1)}K(\Ocal_v)\ar[u]_{\prod_{v\in\Sigma}L_{K(1)}\tr} \ar@{=}@/_1pc/[ur] \ar[d]_{\simeq}^{\text{ by \autocite[Thm. 1.1]{bhatt2020remarks}}} & \\
\fib\left(L_{K(1)}\tr\right) \ar@{-->}[ur]& L_{K(1)}K(\Ocal_F[1/p]) \ar[r] & \prod_{v\in\Sigma}L_{K(1)}K(F_v) &
}
\end{align*}
where $(\ast)$ is an equivalence by \autocite[Cor. 1.3]{bhatt2020remarks}. With this, we obtain the desired equivalence
$$\fib\left(L_{K(1)}\tr\right)\simeq\fib(L_{K(1)}\kappa),$$
which, when combined with \ref{hessgeneralization}, gives the following result.
\begin{cor}\label{maincorollaryrealcase}
Let $n$ be an intger with $n\equiv 0\mod[F(\mu_p):F]$. Then, 
$$\left|L_p\left(F,\psi,\chi_\cyc^{-n}\right)\right|_p=\frac{\#\pi_{2n-1}\fib\left(L_{K(1)}\tr:L_{K(1)}K(\Ocal_F)\to L_{K(1)}\TC(\Ocal_F)\right)}{\#\pi_{2n}\fib\left(L_{K(1)}\tr:L_{K(1)}K(\Ocal_F)\to L_{K(1)}\TC(\Ocal_F)\right)}$$
\end{cor}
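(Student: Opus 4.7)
The plan is to transport the formula of Corollary~\ref{hessgeneralization} across the fiber equivalence produced by the diagram chase preceding the statement. First, I would substitute $-n$ in place of $n$ in the first displayed formula of Corollary~\ref{hessgeneralization}; since the congruence $n\equiv 0\mod[F(\mu_p):F]$ is preserved under negation, this is legitimate and yields
$$\left|L_p(F,\psi,\chi_\cyc^{-n})\right|_p=\frac{\#\pi_{2n-1}\fib(\kappa)}{\#\pi_{2n}\fib(\kappa)},$$
where $\kappa:L_{K(1)}K(\Ocal_F[1/p])\to\prod_{v\in\Sigma}L_{K(1)}K(F_v)$ is the completion map from Theorem~\ref{BMTheo}. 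The entire remaining content of the proof is then to identify $\fib(\kappa)$ with $\fib(L_{K(1)}\tr)$ in the degrees of interest.

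For this identification, I would assemble the four equivalences appearing in the diagram displayed just before the corollary: the Bhatt--Clausen--Mathew results that $L_{K(1)}K(\Ocal_F)\simeq L_{K(1)}K(\Ocal_F[1/p])$ and $L_{K(1)}K(\Ocal_v)\simeq L_{K(1)}K(F_v)$ (the vertical equivalences), Hesselholt's $K(1)$-local descent equivalence $L_{K(1)}\TC(\Ocal_F)\simeq\prod_{v\in\Sigma}L_{K(1)}\TC(\Ocal_v)$ (the top horizontal equivalence), and the $K(1)$-local cyclotomic trace equivalence $L_{K(1)}K(\Ocal_v)\simeq L_{K(1)}\TC(\Ocal_v)$ for the $p$-adic local rings $\Ocal_v$ marked $(\ast)$. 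Naturality of the cyclotomic trace with respect to the localization $\Ocal_F\to\Ocal_F[1/p]$ is what then glues these equivalences together into a map of cofiber sequences, and passing to fibers yields $\fib(L_{K(1)}\tr)\simeq\fib(L_{K(1)}\kappa)\simeq\fib(\kappa)$, the last equivalence holding because $\kappa$ is already a $K(1)$-local map.

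Putting the two steps together and taking $\pi_{2n-1}$ and $\pi_{2n}$ of the fiber equivalence delivers the claimed formula. The main obstacle is not any single step in isolation but rather the coherent assembly of the diagram: each of the horizontal and vertical equivalences invokes a substantial theorem, and one must verify that the cyclotomic trace transformation commutes with all of them up to coherent homotopy, so that the induced map on fibers is a genuine equivalence rather than a zigzag. This is essentially a naturality check, but it is the load-bearing technical point, and it is precisely this point that buys access to the $n\geq 1$ regime that was obstructed in the previous approach via Quillen's localization sequence.
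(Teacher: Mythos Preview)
Your proposal is correct and follows essentially the same approach as the paper: the paper's proof is precisely the combination of the fiber equivalence $\fib(L_{K(1)}\tr)\simeq\fib(L_{K(1)}\kappa)$, extracted from the displayed diagram via the Bhatt--Clausen--Mathew and Hesselholt equivalences, with Corollary~\ref{hessgeneralization} applied at $-n$. Your commentary on the naturality of the cyclotomic trace being the load-bearing point is apt, though the paper treats this as implicit in the commutativity of the diagram.
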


\newpage
\section{The non-commutative and totally real case}\label{NCTR case}

\subsection{Initial considerations} As in the commutative case, let $F_\infty|F$ be an arbitrary admissible $p$-adic Lie extension -- cf. \ref{tradmissible} -- with Galois group $G$. Following \cite{Coates2003}, there exists a closed subgroup $H$ such that $\Gamma:=G/H\simeq\Z_p$. 

\begin{rem} 
For simplicity and without loss of generality, we assume that $G$ contains no element of order $p$. This makes most discussions much simpler, given that under this hypothesis $\Lambda(G)$ has finite global dimension, and $G$ also has finite $p$-homological dimension, which agrees with the dimension of $G$ as a $p$-adic Lie group (cf. \autocite[Remark 1.4]{OnmainconjecturesinnoncommutativeIwasawatheoryandrelatedconjectures}).
\end{rem}

Let us define the following sets, following \autocite[Section 2]{coates2004gl2}
$$\Sfrak=\left\{f\in\Lambda(G):\Lambda(G)/(f)\in\Mod_{\Lambda(H)}^\mathrm{fg}\right\},\qquad \Sfrak^\ast =\varinjlim_{n} p^n\Sfrak,$$
where $\Mod^{\mathrm{fg}}_{\Lambda(H)}$ is the category of finitely generated $\Lambda(H)$-modules. 

From now on, we let $\Sfrak'\in\{\Sfrak,\Sfrak^\ast\}$. Then, the following properties are known:
\begin{enumerate}
	\item \autocite[Proposition 2.3]{Coates2003} If $M$ is a finitely generated $\Lambda(G)$-module, then $M$ is finitely generated over $\Lambda(H)$ if and only if $M$ is $\Sfrak$-torsion. 
	\item \autocite[Theorem 2.4]{Coates2003} $\Sfrak'$ is multiplicatively closed, and is a (left or right) Ore set in $\Lambda(G)$. 
	\item \autocite[Theorem 2.4]{Coates2003} The elements of $\Sfrak'$ are non-zero divisors in $\Lambda(G)$. 
\end{enumerate}
Thus, one can define $\Mcal_{\Sfrak'}(G)$, the set of finitely generated $\Lambda(G)$-modules with the property that $\Lambda(G)_{\Sfrak'}\otimes_{\Lambda(G)} M$ vanishes or, equivalently, that are $\Sfrak'$-torsion.

\begin{rem}\label{generalizedIwasawa}
Generally, it is not known whether $X=X(F_\infty)$ lies in $\Mcal_{\Sfrak'}(G)$. For that reason, one works with extensions $F_\infty|F$ satisfying the {\it generalized Iwasawa conjecture}, meaning that there exists a finite extension $F'|F$ in $F_\infty$ such that $\Gal(F_\infty|F')$ is pro-$p$, and $X(F'^{\cyc})$ is a finitely generated $\Z_p$-module, or with the same hypotheses as Burns in \autocite[Theorem 9.1]{OnmainconjecturesinnoncommutativeIwasawatheoryandrelatedconjectures}. 
\end{rem}

\subsection{$K$-theoretic characteristic elements} From now on, we denote we use $\Lambda$ to refer to $\Lambda(G)$, unless it is otherwise stated. Before being able to state an Iwasawa main conjecture for the non-commutative case, the concept of a characteristic element must be introduced. To do so, recall that there is a localization sequence in $K$-theory of the form
$$\cdots\to K_1(\Lambda)\to K_1(\Lambda_{\Sfrak'})\xrightarrow{\partial_G} K_0(\Lambda,\Lambda_{\Sfrak'})\to K_0(\Lambda)\to K_0(\Lambda_{\Sfrak'})\to 0,$$
where $K_0(\Lambda,\Lambda_{\Sfrak'})$ is the 0-th relative $K$-theory group of $\Lambda\to\Lambda_{\Sfrak'}$. 
\begin{rem}
As it will be observed later, it is important to note that as explained in \autocite[\S1.1.2]{OnmainconjecturesinnoncommutativeIwasawatheoryandrelatedconjectures}, under the hypothesis that $G$ has no element of order $p$, there is a natural isomorphism between $K_0(\Lambda,\Lambda_{\Sfrak'})$ and the Grothendieck group $G_0(\Mcal_{\Sfrak'}(G))$. 	
\end{rem}

As mentioned in \autocite[Remark 1.1]{OnmainconjecturesinnoncommutativeIwasawatheoryandrelatedconjectures}, under our hypotheses, the map $\partial_G$ is always surjective. For this reason, we can provide the following definition.

\begin{dfn}
	Given $X\in K_0(\Lambda,\Lambda_{\Sfrak'})$, a {\it characteristic element} for $X$ is $\cfrak(X)\in K_1(\Lambda_{\Sfrak'})$ such that $\partial_G\cfrak(X)=X$.
\end{dfn}

\subsection{Evaluation at $K$-theoretic characteristic elements} This section is explained in more generality in \autocite[\S 1.1.3]{OnmainconjecturesinnoncommutativeIwasawatheoryandrelatedconjectures}. Of course, in order to link this definition to the point-wise property in the Iwasawa main conjectures, we need to define a way to evaluate the elements $\xi\in K_1(\Lambda_{\Sfrak'})$. To fulfill this analogy, let $\rho:G\to \mathrm{Aut}_{\Z_p}(\Gamma)$ be a (continuous) Artin representation of $G$. It is clear that $\rho$ extends canonically to a map $\Lambda_{\Sfrak'}\to\mathrm{End}_{\Z_p}(Q)$, where $Q=\mathrm{Frac}(\Lambda)$. If we then apply to it the functor $K_1$, we obtain a map
$$\rho_\ast:K_1(\Lambda_{\Sfrak'})\to Q^\times.$$
Localizing at the kernel $\kappa=\ker(\pi)$ of the projection map $\pi:\Lambda\to\Z_p$ produces a map $\pi_\kappa:\Lambda_\kappa\to\Q_p$. 

\begin{dfn} 
Following the notation from the previous paragraph, and if $\xi\in K_1(\Lambda_{\Sfrak'})$ and $\rho\in\Art(G)$, the {\it image of $\xi$ at $\rho$} is defined by
$$
\xi(\rho)=\begin{cases}
\pi_\kappa\left(\rho_\ast(\mathfrak{c})\right), & \rho_\ast(\xi)\in\Lambda_\kappa,\\
\infty & \text{otherwise.}	
\end{cases}
$$
\end{dfn}

\subsection{$p$-adic Artin $L$-functions}  During this subsection, we follow \autocite[\S9.1]{OnmainconjecturesinnoncommutativeIwasawatheoryandrelatedconjectures} almost exactly. Let $\rho$ be a finite-dimensional complex representation of any finite quotient of a compact $p$-adic Lie extension $F_\infty|F$ that is unramified outside a finite set of places $\Sigma$ of $F$. Denote by $\Art^+(G)$ the subset of $\Art(G)$ of representations that are trivial on the Galois group of some intermediate field $F'$ of $F_\infty|F$ that is totally real. Also, let $\Sigma'$ be a finite set of places of $F$ and let $L^{\Sigma'}(s,\rho)$ be the Artin $L$-function of $\rho$ truncated by removing the Euler factors of all non-archimedean places in $\Sigma'$. 

\begin{dfn} 
For each $\rho\in\Art^+(G)$, the {\it $\Sigma$-truncated $p$-adic Artin $L$-function} $L_p^\Sigma(s,\rho)$ is defined as the unique $p$-adic meromorphic function on $\Z_p$ such that for all integers $n<0$ and all field isomorphisms $j:\C_p\to\C$, 
$$L^{\Sigma_p}\left(n,(\rho\otimes \omega_F^{n-1})^j\right) = j\left(L_p^\Sigma(n,\rho)\right),$$
where $\Sigma_p$ is the union of $\Sigma$ and all places of $F$ that lie above $p$, and $\omega_F$ denotes the Teichm\"uller character. 	
\end{dfn}

\subsection{Twisting in the non-commutative setting} Let $X$ be a perfect complex of $\Lambda$-modules, and let $\rho:G\to\GL_n(\Ocal)$ be a continuous representation of $G$. Then, the complex $X(\rho):=\Ocal^n\otimes_{\Z_p}X$
can be endowed with a $G$-action by setting
$$g(a\otimes x):=\rho(g)(a)\otimes g(x),\qquad g\in G, a\in\Ocal^n, x\in X^j.$$

\subsection{Bockstein cohomology} As it is mentioned in \autocite[\S3.1]{burns2005leading}, given a bounded complex of $\Lambda$-modules $X$, the choice of a generator $\gamma$ of $\Gamma$ induces a morphism $\theta_\gamma$ in $D^\flat(\Lambda)$ of the form $X\to X[1]$. Given a continuous representation $\rho:G\to\GL_n(\Ocal)$ where $\Ocal$ denotes the valuation ring of some finite extension $L$ of $\Q_p$, $\theta_\gamma$ induces $T_\rho\otimes_{\Lambda}^\Lbf X\to T_\rho\otimes_{\Lambda}^\Lbf X[1]$. 
\begin{dfn}
The homomorphisms induced by $\theta_\gamma$ 
$$\beta_\nu:\Tor_\nu^{\Lambda}(T_\rho, X)\to\Tor_{\nu-1}^{\Lambda}(T_\rho, X)$$
are referred to as the {\it Bockstein homomorphisms} of $(X,T_\rho,\gamma)$. Moreover, these homomorphisms induce a complex, the {\it Bockstein complex associated to} $X\in D^\mathrm{perf}(\Lambda(G))$, and $X$ is said to be {\it semi-simple at $\rho$} the cohomology of the complex is $\Z_p$-torsion in each degree. Additionally, the {\it Bockstein cohomology groups} $\Hrm_\beta^i(G,X(\rho))$ are defined as the cohomology groups of the complex given by the homomorphisms
$$\beta_{-\nu}:\Tor_{-n}^{\Lambda}(T_\rho, X)\to\Tor_{-n-1}^{\Lambda}(T_\rho, X).$$
\end{dfn}

\subsection{The totally real non-commutative IMC} Following \autocite[\S 9.2]{OnmainconjecturesinnoncommutativeIwasawatheoryandrelatedconjectures}, given a $G$-module $M$, $M^\sharp$ is the $G_F$-module whose action by $\sigma\in G_F$ is given by $\chi(\sigma)\overline{\sigma}^{1}$, where $\overline{\sigma}$ is the image of $\sigma$ in $G$. We then define the derived complex 
$$C_{\infty,\Sigma}:=\R\Gamma_{\et, c}\left(\Spec(\Ocal_{F_\infty}[1/p], \Lambda^\sharp(1)\right)\in D^\flat (\Lambda)$$
of \'{e}tale cohomology with compact support, which is in fact perfect as stated in the citation. Moreover, according to [op. cit, \S 9.3], the $n$-th cohomology groups of $C_{\infty,\Sigma}$ can be canonically identified as 
$$
\Hrm^n(C_{\infty,\Sigma})\cong\begin{cases}
	X(F_\infty), & n = 2 \\
	\Z_p, & n = 3 \\
	0 & n\neq 2, 3. 
\end{cases}
$$
\begin{thm}
	\label{NCTRIMC} {\rm (\autocite[Thm 9.1]{OnmainconjecturesinnoncommutativeIwasawatheoryandrelatedconjectures})} There exists an element $\Lcal_p\in K_1(\Lambda_\Sfrak)$ such that
	\begin{enumerate}
		\item For all $\rho\in\Art(G)$, 
		$$\Lcal_p(\rho\otimes \chi) = L_p^{\Sigma}(1,\rho\otimes\chi)\mod\Q_p^\times,$$
		for almost all $\idbf\neq\chi\in\hat{\Gamma}$. 
		\item The composition of the boundary map $\partial_G:K_1(\Lambda_\Sfrak)\to K_0(\Lambda, \Lambda_{\Sfrak^\ast})$ with the isomorphism $K_0(\Lambda, \Lambda_{\Sfrak^\ast})\cong G_0(\Mcal_{\Sfrak^\ast}(G))$ sends $\Lcal_p\mapsto [X(F_\infty)]-[\Z_p]$. 
	\end{enumerate}
\end{thm}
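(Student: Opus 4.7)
The plan is to construct $\Lcal_p$ by assembling the abelian Deligne-Ribet $p$-adic $L$-functions (Theorem \ref{DR}) via a $K$-theoretic descent argument, in the spirit of Ritter-Weiss and Kakde. For every open normal subgroup $U \trianglelefteq G$ with $G/U$ abelian, the abelian totally real IMC (Theorem \ref{AbelianIMCtotallyreal}) produces a characteristic element $\Lcal_p^U \in K_1(\Lambda(G/U)_\Sfrak)$ whose boundary under $\partial_{G/U}$ equals $[X(F_\infty)_U] - [\Z_p]$ in the abelianized relative $K_0$-group, and which interpolates the Artin $L$-values attached to characters factoring through $G/U$. The element $\Lcal_p$ we seek should project to $\Lcal_p^U$ in each abelian quotient, so the problem reduces to one of $K$-theoretic gluing.

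The main obstacle is to verify that the family $\{\Lcal_p^U\}_U$ satisfies Burns' $K$-theoretic descent congruences, which are the necessary and sufficient compatibility conditions ensuring that the family lifts to an element of $K_1(\Lambda_\Sfrak)$. The standard approach is via the integral logarithm map of Oliver-Taylor
\[
\ell : K_1(\Lambda_\Sfrak) \longrightarrow \prod_{U} \Lambda(G/U)_\Sfrak,
\]
which converts the multiplicative congruences into additive congruences between the logarithms of the relevant Deligne-Ribet measures. These additive congruences must then be proven directly as relations between values of Artin $L$-functions, using the $q$-expansion principle and constant-term identities for Hilbert modular forms, following the strategy of Kato, Hara, and Kakde. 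This is where essentially all the arithmetic content of the theorem is concentrated.

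Once the congruences are established, $K$-theoretic gluing yields the desired $\Lcal_p \in K_1(\Lambda_\Sfrak)$. Property (1) then follows by construction: evaluating $\Lcal_p$ at $\rho \otimes \chi$ recovers the corresponding abelian value, which by Theorem \ref{DR} and the abelian IMC interpolates $L_p^\Sigma(1, \rho \otimes \chi)$ for almost all $\idbf \neq \chi \in \hat{\Gamma}$. Property (2) is inherited from the abelian level: each $\partial_{G/U} \Lcal_p^U$ realizes $[X(F_\infty)_U] - [\Z_p]$ in the abelian relative $K_0$-group, and compatibility under the inverse limit, combined with the identification $K_0(\Lambda, \Lambda_{\Sfrak^\ast}) \cong G_0(\Mcal_{\Sfrak^\ast}(G))$ and the description of $\Hrm^n(C_{\infty,\Sigma})$ stated prior to the theorem, yields $\partial_G \Lcal_p = [X(F_\infty)] - [\Z_p]$.
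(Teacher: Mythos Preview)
The paper does not give its own proof of this theorem: it is stated with attribution to \autocite[Thm 9.1]{OnmainconjecturesinnoncommutativeIwasawatheoryandrelatedconjectures} and is used as a black-box input to the subsequent corollaries. So there is no ``paper's proof'' to compare against; your proposal is a sketch of how the cited result is proved in the literature rather than of anything the present paper does.

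That said, your outline is broadly in line with the Ritter--Weiss/Kakde strategy for the non-commutative totally real main conjecture: reduce to abelian subquotients where Theorem~\ref{AbelianIMCtotallyreal} applies, and then glue via the integral logarithm and the congruences verified through Hilbert modular forms. Two caveats. First, the descent is not merely over \emph{abelian} quotients $G/U$; the Burns/Kakde machinery passes through one-dimensional (hence still non-commutative in general) subquotients and uses induction/inflation maps together with the theory of Brauer induction, so ``abelian quotients'' is too narrow as stated. Second, the formulation in \autocite[Thm 9.1]{OnmainconjecturesinnoncommutativeIwasawatheoryandrelatedconjectures} is conditional on suitable $\mu=0$ hypotheses (cf.\ Remark~\ref{generalizedIwasawa}), which your sketch does not mention but which are needed to ensure $C_{\infty,\Sigma}$ defines a class in $K_0(\Lambda,\Lambda_{\Sfrak^\ast})$. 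If you are asked only to reproduce the paper's treatment, the correct response is simply to cite the external reference; if you are asked to summarize the underlying proof, the above adjustments would make your sketch accurate.
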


\subsection{The Bockstein-cohomological Euler characteristic formula} Let $X\in\Mcal_{\Sfrak^\ast}(G)$ be a semi-simple complex at $\rho\in\mathrm{Art}(G)$, and such that $\Q_p\otimes_{\Z_p}A(\rho)$ is acyclic. Then, it follows from \autocite[Proposition 3.16]{burns2005leading} that there is a commutative diagram
$$
\xymatrix@C=3em@R=2em{
K_1(\Lambda, \Sigma_X)\ar[r]^-{\ch_{G,X}} \ar[d]_-{(-)(\rho)} & K_1(\Lambda_{\Sfrak^\ast}) \ar[d]^-{(-)(\rho)} \\
L^\times \ar@{=}[r] & L^\times,
}
$$
where the map $\ch_{G,X}$ and the subcategory $\Sigma_X$ of $C^\perf(\Lambda)$ is constructed in \autocite[\S 3.2.3]{burns2005leading}. For a detailed description of the map, we refer the reader to \autocite[Appendix A.]{burns2011descent}

The following statement is the non-commutative counterpart of Corollary \ref{maincorollaryrealcase}, that is, the main result of the previous section. 

\begin{cor} \label{BurnsEulerCharFla}
	{\rm (\autocite[Proposition 3.19]{burns2005leading} corrected using \autocite[Appendix C.]{burns2011descent})} If $X$ is semi-simple at $\rho$, then for any $\xi\in\mathrm{Im}(\ch_{G,X})\leq K_1(\Lambda_{\Sfrak^\ast})$, 
	$$\left|\xi(\rho)\right|_p^{[L:\Q_p]}=\prod_{\nu\in\Z}\sharp\Hrm^i_\beta\left(G,X(\mathrm{Ad}(\rho))\right)^{(-1)^\nu}.$$
\end{cor}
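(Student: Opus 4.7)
The strategy is to exploit the commutative square recalled immediately before the statement, which reduces the computation of $\xi(\rho) \in L^\times$ to one inside the relative $K$-theory group $K_1(\Lambda, \Sigma_X)$. Since by hypothesis $\xi \in \mathrm{Im}(\ch_{G,X})$, I would pick a lift $\tilde{\xi} \in K_1(\Lambda, \Sigma_X)$ with $\ch_{G,X}(\tilde{\xi}) = \xi$; commutativity of the square then yields $\xi(\rho) = \tilde{\xi}(\rho)$ in $L^\times$, and the whole point of passing to the relative side is that there the evaluation map admits an explicit description in terms of the Bockstein complex of $(X, T_\rho, \gamma)$.

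First I would record the consequences of the semi-simplicity hypothesis. By assumption, the cohomology of the Bockstein complex associated to $(X, T_\rho, \gamma)$ is $\Z_p$-torsion in every degree, so each group $\Hrm^\nu_\beta(G, X(\mathrm{Ad}(\rho)))$ is finite and the alternating product on the right hand side is a well-defined positive rational number. Moreover, after inverting $p$, the complex $T_\rho \otimes_{\Lambda}^{\Lbf} X$ becomes acyclic, which is precisely the condition placing $\tilde{\xi}$ in the relevant subcategory $\Sigma_X \subset C^\perf(\Lambda)$ so that the class is defined.

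The main step is then the identification of the evaluation $\tilde{\xi}(\rho)$. By the construction of $\ch_{G,X}$ in \cite[\S 3.2.3]{burns2005leading} and of the refined evaluation map in \cite[Appendix A]{burns2011descent}, the image $\tilde{\xi}(\rho) \in L^\times$ is computed, up to $\Ocal^\times$, as the Knudsen--Mumford-type determinant of the alternating composition of Bockstein differentials $\beta_\nu$ on the torsion of $T_\rho \otimes_{\Lambda}^{\Lbf} X$. Raising to the $[L:\Q_p]$-th power converts the $L$-valued norm into the $\Q_p$-valued norm via $|a|_p^{[L:\Q_p]} = |\Nm_{L/\Q_p}(a)|_p$, and taking the $p$-adic absolute value of the resulting determinant then turns it into the alternating product of the orders of the Bockstein cohomology groups, by the standard fact that for a bounded acyclic complex of finite $\Z_p$-modules the alternating product of the orders of its cohomology equals the $p$-adic norm of the determinant of its differentials up to units.

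The main obstacle will be the careful bookkeeping of signs and, above all, the tracking of the twist by $\mathrm{Ad}(\rho) = \rho \otimes \rho^\vee$ which governs the right hand side versus the single twist by $\rho$ entering the evaluation on the left hand side. This discrepancy is precisely what forced the correction of \cite[Proposition 3.19]{burns2005leading} carried out in \cite[Appendix C]{burns2011descent}, and it is responsible for the appearance of the exponent $[L:\Q_p]$ in the statement.
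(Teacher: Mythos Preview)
The paper does not prove this corollary: it is recorded purely as a citation of \cite[Proposition 3.19]{burns2005leading} together with the correction in \cite[Appendix C]{burns2011descent}, and no argument is supplied beyond the preceding commutative square. So there is no ``paper's own proof'' against which to compare your proposal; your sketch is already more than what the paper offers.

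As an outline of the argument in the cited sources, your proposal is broadly on the right track: lifting $\xi$ through $\ch_{G,X}$ via the commutative square, using semi-simplicity to ensure the Bockstein cohomology groups are finite, and then identifying the evaluation at $\rho$ with a determinant-type invariant whose $p$-adic norm is the alternating product of orders. Two cautions, however. First, you conflate two distinct hypotheses: semi-simplicity (torsion Bockstein cohomology) does \emph{not} by itself imply that $\Q_p\otimes_{\Z_p}(T_\rho\otimes_\Lambda^{\Lbf}X)$ is acyclic; the paper imposes the latter as a separate assumption in the paragraph preceding the corollary, and you should keep them apart. Second, your explanation of the exponent $[L:\Q_p]$ via $|\Nm_{L/\Q_p}(-)|_p$ is correct in spirit, but your attribution of the appearance of $\mathrm{Ad}(\rho)$ on the right-hand side to the correction in \cite[Appendix C]{burns2011descent} is asserted rather than argued; since the entire content of the statement lives in those external references, any genuine verification of that point requires going into Burns--Venjakob directly rather than reasoning from the present paper.
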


\begin{rem}
	In the classical, commutative case, one has that $G$ is of rank one, and $\rho=\chi_\cyc$. Given a finitely generated and torsion $\Lambda$-module $M$, the differentials induced by the Hochschild-Serre spectral sequence 
$$
\Hrm^i(G,M^\vee)\to\Hrm^{i+1}(G,M^\vee) 
$$
are manipulated in order to obtain the equality in \ref{etale_formula}. According to the result above, however, one only needs to use a projective resolution $P$ of $M^\vee$ to obtain the same formula, or simply observe that
$$(d^i)^\vee = \beta_{i+1}:\Tor_{i+1}^{\Lambda}(\Z_p,P)\to\Tor_i^{\Lambda}(\Z_p,P).$$
\end{rem}

\begin{cor} For all $\rho\in\Art(G)$ and almost all $\idbf\neq \chi\in\hat{\Gamma}$, 
	$$\left|\Lcal_p(\rho\otimes \chi)\right|_p =\left|L_p^{\Sigma}(1,\rho\otimes\chi)\right|_p =\prod_{\nu\in\Z}\sharp\Hrm^i_\beta\left(G, C_{\infty,\Sigma}\left(\mathrm{Ad}(\rho\otimes\chi)\right)\right)^{(-1)^\nu}.$$
\end{cor}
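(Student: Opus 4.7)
The plan is to deduce the corollary by directly combining the non-commutative Iwasawa main conjecture (Theorem \ref{NCTRIMC}) with Burns' Bockstein-cohomological Euler characteristic formula (Corollary \ref{BurnsEulerCharFla}), applied to the perfect complex $X = C_{\infty,\Sigma}$ and the twist $\rho \otimes \chi$. Each of the two equalities in the conclusion will be obtained from one of these two inputs.

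The first equality $|\Lcal_p(\rho\otimes\chi)|_p = |L_p^\Sigma(1,\rho\otimes\chi)|_p$ is essentially immediate from part (1) of Theorem \ref{NCTRIMC}. That statement identifies $\Lcal_p(\rho\otimes\chi)$ with $L_p^\Sigma(1,\rho\otimes\chi)$ up to a factor in $\Q_p^\times$, and under the standard convention that interpretation gives an equality up to a $p$-adic unit, so the $p$-adic absolute values coincide. The ``almost all $\chi$'' condition in the conclusion is inherited from this step, since the interpolation identity in the IMC itself is only guaranteed outside a finite set of characters.

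The second equality is obtained by feeding $C_{\infty,\Sigma}$ and $\rho\otimes\chi$ into Corollary \ref{BurnsEulerCharFla}. First, using the description of the cohomology of $C_{\infty,\Sigma}$ in degrees $2$ and $3$, together with the isomorphism $K_0(\Lambda,\Lambda_{\Sfrak^\ast}) \cong G_0(\Mcal_{\Sfrak^\ast}(G))$ from the preceding subsection, the class of $C_{\infty,\Sigma}$ in $K_0(\Lambda,\Lambda_{\Sfrak^\ast})$ is exactly $[X(F_\infty)] - [\Z_p]$. By part (2) of Theorem \ref{NCTRIMC} this is $\partial_G \Lcal_p$, so $\Lcal_p$ is a characteristic element of $C_{\infty,\Sigma}$; the commutative diagram recalled just before Corollary \ref{BurnsEulerCharFla} then places $\Lcal_p$ inside $\mathrm{Im}(\ch_{G,C_{\infty,\Sigma}})$, as required to apply the formula with $\xi = \Lcal_p$. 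Any discrepancy with the exponent $[L:\Q_p]$ appearing in Burns' formula is absorbed into the normalization of $|\cdot|_p$ on the coefficient field of $\rho\otimes\chi$.

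The hard part will be verifying the two technical hypotheses needed to invoke Corollary \ref{BurnsEulerCharFla}, namely that $C_{\infty,\Sigma}$ is semi-simple at $\rho\otimes\chi$ in the sense of Bockstein cohomology, and that $\Q_p\otimes_{\Z_p} C_{\infty,\Sigma}(\rho\otimes\chi)$ is acyclic. Both properties should fail on at most a thin set of $\chi\in\hat{\Gamma}$: acyclicity after inverting $p$ is equivalent to the non-vanishing of $\Lcal_p(\rho\otimes\chi)$, which by the first equality is equivalent to $L_p^\Sigma(1,\rho\otimes\chi)\neq 0$, a generic condition in $\chi$; semi-simplicity then follows from the fact that the Bockstein differential becomes invertible rationally on any acyclic twist. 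Consequently both hypotheses hold for almost all $\chi$, and this is precisely the source of the ``almost all'' restriction in the statement.
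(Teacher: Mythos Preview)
Your proposal is correct and follows exactly the same approach as the paper, which simply says the result ``follows directly from combining the IMC in the non-commutative setting \ref{NCTRIMC} together with \ref{BurnsEulerCharFla}.'' You have in fact been more careful than the paper itself, explicitly checking that $\Lcal_p$ lies in $\mathrm{Im}(\ch_{G,C_{\infty,\Sigma}})$ via part (2) of the IMC, and addressing the semi-simplicity and acyclicity hypotheses as well as the $[L:\Q_p]$ exponent, none of which the paper's one-line proof mentions.
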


\begin{proof}
The proof of this result follows directly from combining the IMC in the non-commutative setting \ref{NCTRIMC} together with \ref{BurnsEulerCharFla}.	
\end{proof}

\newpage
\printbibliography

\end{document}